\newtheorem{proposition}{Proposition}[section]
\newtheorem{lemma}[proposition]{Lemma}
\newtheorem{corollary}[proposition]{Corollary}
\newtheorem{theorem}[proposition]{Theorem}
\newtheorem{remark}[proposition]{Remark}
\theoremstyle{definition}
\newcommand{\selabel}[1]{\label{se:#1}}
\def\<{\leq}
\def\>{\geq}
\def\e{\varepsilon}
\def\ot{\otimes}
\def\ra{\rightarrow}
\date{}
\begin{document}
\title{The Green rings of Taft algebras}
\author{Huixiang Chen}
\address{School of Mathematical Science, Yangzhou University,
Yangzhou 225002, China}
\email{hxchen@yzu.edu.cn}
\author{Fred Van Oystaeyen}
\address{Department of Mathematics and Computer
Science, University of Antwerp, Middelheimlaan 1, B-2020 Antwerp,
Belgium}
\email{fred.vanoystaeyen@ua.ac.be}
\author{Yinhuo Zhang}
\address{Department WNI, University of Hasselt, Universitaire Campus, 3590 Diepeenbeek,Belgium }
\email{yinhuo.zhang@uhasselt.be}
\subjclass{16A16}
\keywords{Green ring, indecomposable module, Taft algebra}
\begin{abstract}
We compute the Green ring of the Taft algebra $H_n(q)$,
where $n$ is a positive integer greater than 1,
and $q$ is an $n$-th root of unity.
%We first discuss the representations
%of Taft algebras.  There are $n^2$ non-isomorphic finite dimensional
%indecomposable modules over $H_n(q)$, and all of them are uniserial.
%For each $1\leqslant l\leqslant n$, there are $n$ finite dimensional
%indecomposable $H_n(q)$-modules $M(l, r)$, $r\in{\mathbb Z}_n$,
%up to isomorphism. We show that every indecomposable projective
%$H_n(q)$-module is of dimension $n$.
It turns out that the Green ring
$r(H_n(q))$ of the Taft algebra $H_n(q)$ is a commutative ring generated
by two elements subject to certain relations defined recursively. Concrete examples
for $n=2,3, ... , 8$ are given.
\end{abstract}

\maketitle
\section*{\bf Introduction}\selabel{1}

The tensor product of representations of a Hopf algebra is an important ingredient
in the representation theory of Hopf algebras and quantum groups. In particular the decomposition of the tensor product
of indecomposable modules into a direct sum of indecomposables has received enormous
attention. For modules over a group algebra this information is encoded in the
structure of the Green ring (or the representation ring) for finite groups,
 \cite{Archer, BenCar, BenPar, BrJoh, Green, HTW}).
For modules over a Hopf algebra or a quantum group there are results
by Cibils on a quiver quantum group \cite{Cib},
by Witherspoon on  the quantum double of a finite group
\cite{With}, by Gunnlaugsd$\acute{\rm o}$ttir on the half quantum groups
(or Taft algebras) \cite{Gunn}, and by Chin on
the coordinate Hopf algebra of quantum ${\rm SL}(2)$ at a root of unity
\cite{Chin}. However, the Green rings of those Hopf algebras are either equal to
the Grothendick rings (in the semisimple cases) or not yet computed because of the complexity.

In this paper, we compute the Green rings of
Taft algebras. It turns out that the Green ring of a Taft algebra is
much more complicated than its Grothendick ring.
In Section 1, we recall some basic definitions and results,
and make some preparations for the rest of the paper.
In Section 2, we recall the indecomposable modules over the Taft algebra $H_n(q)$ from \cite{Cib, Gunn},
using the terminology of matrix representations, where
$n$ is a positive integer $\geqslant 2$, and $q$ is a primitive $n$-th root of
unity in the ground field $k$. There are $n^2$ non-isomorphic
finite dimensional indecomposable modules over $H_n(q)$, and all of them are
uniserial. Moreover, for each $1\leqslant l\leqslant n$, there are exactly $n$
finite dimensional indecomposable $H_n(q)$-modules $M(l, r)$, $r\in{\mathbb Z}_n$,
up to isomorphism. Every indecomposable projective $H_n(q)$-module
is $n$-dimensional. In Section 3,
we describe the Green ring of Taft algebra $H_n(q)$.
We first recall the decomposition formula of the tensor product of
two indecomposable modules over $H_n(q)$ from \cite{Cib, Gunn}.
From the decomposition formula, we know that the tensor product of any two
$H_n(q)$-modules is commutative. Moreover, the tensor product of two
indecomposable non-projective modules has a simple summand if and only if the two indecomposable
modules have the same dimension. Finally we describe the structure
of the Green ring $r(H_n(q))$ of $H_n(q)$. We show  that the Green ring $r(H_n(q))$
is generated by two elements
subject to certain relations which can be defined recursively.

\section{\bf Preliminaries}\selabel{2}

Throughout, we work over a fixed field $k$. Unless
otherwise stated, all algebras, Hopf algebras and modules are
defined over $k$; all modules are left modules and finite dimensional;
all maps are $k$-linear; dim,
$\otimes$ and Hom stand for $\mbox{\rm dim}_k$, $\otimes_k$ and
Hom$_k$, respectively. For the theory of Hopf algebras and quantum groups, we
refer to \cite{Ka, Maj, Mon, Sw}.

Let $0\not=q\in k$. For any integer $n>0$, set
$(n)_q=1+q+\cdots +q^{n-1}$.
Observe that $(n)_q=n$ when $q=1$, and
$$
(n)_q=\frac{q^n-1}{q-1}
$$
when $q\not= 1$.
Define the $q$-factorial of $n$ by
$(0)!_q=1$ and
$(n)!_q=(n)_q(n-1)_q\cdots (1)_q$ for $n>0$.
Note that $(n)!_q=n!$ when $q=1$, and
$$
(n)!_q=
\frac{(q^n-1)(q^{n-1}-1)\cdots (q-1)}{(q-1)^n}
$$
when $n>0$ and $q\not= 1$.
The  $q$-binomial coefficients
$
\left(\begin{array}{c}
n\\
i\\
\end{array}\right)_q
$
are defined inductively as follows for $0\leqslant i\leqslant n$:
$$
\left(\begin{array}{c}
n\\
0\\
\end{array}\right)_q
=1=
\left(\begin{array}{c}
n\\
n\\
\end{array}\right)_q
\quad\quad
\mbox{ for } n\geqslant 0,$$
$$
\left(\begin{array}{c}
n\\
i\\
\end{array}\right)_q
=
q^i
\left(\begin{array}{c}
n-1\\
i\\
\end{array}\right)_q
+
\left(\begin{array}{c}
n-1\\
i-1\\
\end{array}\right)_q
\quad \quad
\mbox{ for } 0< i< n.$$
It is well-known that
$
\left(\begin{array}{c}
n\\
i\\
\end{array}\right)_q$
is a polynomial in $q$ with integer coefficients and with value at $q=1$
is equal to the usual binomial coefficient
$
\left(\begin{array}{c}
n\\
i\\
\end{array}\right)$, and that
$$
\left(\begin{array}{c}
n\\
i\\
\end{array}\right)_q
=\frac{(n)!_q}{(i)!_q(n-i)!_q}
$$
when
$(n-1)!_q\not = 0$ and $0<i<n$
(see \cite[Page 74]{Ka}).

Throughout this paper, we fix an integer
$n\geqslant 2$ and assume that the field $k$ contains an
$n$-th primitive root $q$ of unity.
Then we have
$$
\left(\begin{array}{c}
n\\
i\\
\end{array}\right)_q\not=0,\hspace{0.3cm} (i)!_q\not=0,\hspace{0.3cm}
\ \mathrm{for}\ 0<i<n,
$$
and that $n$ is not divisible by the characteristic of $k$, i.e.
$\displaystyle{\frac{1}{n}}\in k$.

The Taft algebra $H_n(q)$ is generated by two elements $g$ and $h$
subject to the relations (see \cite{Ta})
$$g^n=1,\ h^n=0,\ hg=qgh.$$
$H_n(q)$ is a Hopf algebra with coalgebra structure $\Delta$ and antipode $S$ given by
$$\begin{array}{lll}
\Delta(g)=g\ot g,& \Delta(h)=1\ot h+h\ot g,&\e(g)=1,\\
\e(h)=0,&S(g)=g^{-1}=g^{n-1},& S(h)=-q^{-1}g^{n-1}h.\\
\end{array}$$
Note that dim$H_n(q)=n^2$ and $\{g^ih^j|0\leqslant i, j\leqslant n-1\}$
forms a $k$-basis for $H_n(q)$. When $n=2$, $H_2(q)$ is exactly Sweedler's
4-dimensional Hopf algebra.

Let $H$ be a Hopf algebra. The representation ring $r(H)$ and $R(H)$
can be defined as follows. $r(H)$ is the abelian group generated by the
isomorphism classes $[V]$ of finite dimensional $H$-modules $V$
modulo the relations $[M\oplus V]=[M]+[V]$. The multiplication of $r(H)$
is given by the tensor product of $H$-modules, that is,
$[M][V]=[M\ot V]$. Then $r(H)$ is an associative ring.
$R(H)$ is an associative $k$-algebra defined by $k\ot_{\mathbb Z}r(H)$.
Note that $r(H)$ is a free abelian group with a $\mathbb Z$-basis
$\{[V]|V\in{\rm ind}(H)\}$, where ${\rm ind}(H)$ denotes the category
of finite dimensional indecomposable $H$-modules.

\section{\bf Representations of $H_n(q)$}\selabel{3}

For a module $M$ over a finite dimensional algebra $A$,
let ${\rm rl}(M)$ denote the Loewy length (=radical length=socle length)
of $M$, and let ${\rm l}(M)$ denote the length of $M$.
Let $P(M)$ denote the projective cover of $M$, and let $I(M)$
denote the injective hull of $M$.

Cibils constructed an $nd$-dimensional Hopf algebra $kZ_n(q)/I_d$ in \cite{Cib},
where $q$ is an $n$-th root of unity in $k$ with order $d$.
He classified the indecomposable modules over $kZ_n(q)/I_d$, and gave
the decomposition of the tensor products of two arbitrary indecomposable
modules there. When $q$ is a primitive $n$-th root of unity,
$kZ_n(q)/I_n$ is isomorphic to $H_n(q)$ (see \cite{Cib}).
Therefore, from \cite{Cib}, one can get the classification of
indecomposable modules and the decomposition of the tensor product
of two indecomposable modules over $H_n(q)$.
For the completeness, we will describe the indecomposable modules
over $H_n(q)$ in this section, using the terminology of matrix representation.

Let $G(H_n(q))$ denote the group of group-like elements in $H_n(q)$.
Then $G(H_n(q))=\{1, g, \cdots, g^{n-1}\}$ is a cyclic group
of order $n$ generated by $g$. The group algebra $kG(H_n(q))$
is a Hopf subalgebra of $H_n(q)$. There is a Hopf algebra epimorphism
$\pi: H_n(q)\ra kG(H_n(q))$ defined by $\pi(g)=g$ and $\pi(h)=0$.
Since $k$ contains an $n$-th primitive root of unity,
the group algebra $kG(H_n(q))$ is semisimple.
It follows that Ker$\pi=\langle h\rangle\supseteq J(H_n(q))$, the Jacobson radical of $H_n(q)$.
On the other hand, since $H_n(q)h=hH_n(q)$ and $h^n=0$, $J(H_n(q))\supseteq
(h)=H_n(q)h$, the ideal of $H_n(q)$ generated by $h$.
Hence Ker$\pi=(h)=J(H_n(q))$.
Thus, an $H_n(q)$-module $M$ is semisimple if and only if $h\cdot M=0$,
and moreover $M$ is simple
if and only if $h\cdot M=0$ and $M$ is simple as a module
over the Hopf subalgebra $kG(H_n(q))$.
Therefore, we have the following lemma.

\begin{lemma}\label{sim}
There are $n$ non-isomorphic simple $H_n(q)$-modules $S_i$, and
each $S_i$ is 1-dimensional and determined by
$$g\cdot v=q^iv,\ h\cdot v=0,\ v\in S_i,$$
where $i\in{\mathbb Z}_n:={\mathbb Z}/(n)$.  \hfill$\Box$
\end{lemma}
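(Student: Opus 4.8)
The plan is to reduce the classification of simple $H_n(q)$-modules to the classification of simple modules over the group algebra $kG(H_n(q))$, which has already been set up in the discussion preceding the lemma. First I would invoke the fact, established just above the statement, that $\mathrm{Ker}\,\pi = (h) = J(H_n(q))$, where $\pi\colon H_n(q)\to kG(H_n(q))$ is the Hopf algebra epimorphism sending $g\mapsto g$, $h\mapsto 0$. Since $J(H_n(q))$ acts as zero on every simple module, every simple $H_n(q)$-module factors through the quotient $H_n(q)/J(H_n(q)) \cong kG(H_n(q))$; conversely every simple $kG(H_n(q))$-module pulls back to a simple $H_n(q)$-module along $\pi$. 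Thus the simple $H_n(q)$-modules are in bijection with the simple $kG(H_n(q))$-modules, and on such a module $h$ necessarily acts as $0$.

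Next I would classify the simple modules of $kG(H_n(q))$. Since $G(H_n(q)) = \langle g\mid g^n = 1\rangle$ is cyclic of order $n$ and $k$ contains a primitive $n$-th root of unity $q$ (so in particular $\frac1n\in k$ and $kG(H_n(q))$ is semisimple), the group algebra splits as a product of $n$ copies of $k$, one for each character $g\mapsto q^i$ with $i\in\mathbb{Z}_n$. Each such character gives a $1$-dimensional simple module $S_i$ on which $g$ acts by the scalar $q^i$; distinct values of $i\bmod n$ give non-isomorphic modules because $q$ has exact order $n$, so the scalars $q^i$ are pairwise distinct. Pulling these back along $\pi$ yields precisely the modules described in the statement: $1$-dimensional, with $g\cdot v = q^i v$ and $h\cdot v = 0$.

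Finally I would assemble the two halves. By the reduction in the first paragraph the $S_i$ for $i\in\mathbb{Z}_n$ exhaust all simple $H_n(q)$-modules up to isomorphism, and by the second paragraph they are pairwise non-isomorphic and of the stated form, giving exactly $n$ isomorphism classes. There is no real obstacle here — the only point requiring a little care is the standard verification that $kG$ is semisimple with one-dimensional simples when $G$ is cyclic of order $n$ and $q\in k$ is a primitive $n$-th root of unity, which is immediate from Maschke's theorem together with the fact that $k[x]/(x^n-1)\cong\prod_{i\in\mathbb{Z}_n} k$ under $x\mapsto (q^i)_i$.
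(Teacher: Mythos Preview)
Your proposal is correct and follows essentially the same approach as the paper: the paragraph preceding the lemma already establishes that $\mathrm{Ker}\,\pi=(h)=J(H_n(q))$ and deduces that simples are exactly the simples over $kG(H_n(q))$ inflated along $\pi$, after which the lemma is stated with no further proof. Your write-up simply fills in the (standard) classification of simples for the cyclic group algebra $kG(H_n(q))$, which the paper leaves implicit.
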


Note that $J(H_n(q))^m=H_n(q)h^m$ for all $m\geqslant 1$.
Hence $J(H_n(q))^{n-1}\neq 0$, but $J(H_n(q))^n=0$.
This means that the Loewy length of $H_n(q)$ is $n$.
Since every simple $H_n(q)$-module is 1-dimensional,
${\rm l}(M)={\rm dim}(M)$ for any $H_n(q)$-module $M$.

Now let $M$ be any $H_n(q)$-module.
Since $J(H_n(q))^s=H_n(q)h^s=h^sH_n(q)$, we have
rad$^s(M)=h^s\cdot M$ for all $s\geqslant 1$.

\begin{lemma}\label{MStruc1}
Let $1\leqslant l\leqslant n$ and $i\in \mathbb{Z}$.
Then there is an algebra map $\rho_{l,i}:H_n(q)\ra M_l(k)$
given by
$$\rho_{l,i}(g)=\left(
                  \begin{array}{ccccc}
                    q^i&&&&\\
                    &q^{i-1}&&&\\
                    &&q^{i-2}&&\\
                    &&&\ddots&\\
                    &&&&q^{i-l+1}\\
                  \end{array}
                \right),\
\rho_{l,i}(h)=\left(
                \begin{array}{ccccc}
                  0 &  &  &  &  \\
                  1 & 0 &  &  &  \\
                    & 1 & \ddots &  & \\
                    &   & \ddots & 0 &  \\
                    &   &   & 1 & 0 \\
                \end{array}
              \right).$$
Let $M(l,i)$ denote the corresponding left $H_n(q)$-module.
\end{lemma}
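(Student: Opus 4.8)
The plan is to verify directly that the assignments $\rho_{l,i}(g)$ and $\rho_{l,i}(h)$ respect the three defining relations of the Taft algebra, namely $g^n=1$, $h^n=0$, and $hg=qgh$. Since $H_n(q)$ is presented by generators $g,h$ modulo these relations, any pair of matrices in $M_l(k)$ satisfying them extends uniquely to an algebra homomorphism $H_n(q)\to M_l(k)$, which is exactly the content of the statement. So the proof reduces to three short matrix computations.

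First I would check $g^n=1$: the matrix $\rho_{l,i}(g)$ is diagonal with entries $q^{i},q^{i-1},\dots,q^{i-l+1}$, so $\rho_{l,i}(g)^n$ is diagonal with entries $q^{n(i-j)}=1$ for $0\le j\le l-1$, using that $q^n=1$. Next, $\rho_{l,i}(h)$ is the $l\times l$ nilpotent lower Jordan block; its $m$-th power shifts the subdiagonal of $1$'s down by $m$, so $\rho_{l,i}(h)^l=0$, and since $l\le n$ this gives $\rho_{l,i}(h)^n=0$ as well. Finally, for the commutation relation one computes both $\rho_{l,i}(h)\rho_{l,i}(g)$ and $q\,\rho_{l,i}(g)\rho_{l,i}(h)$ entrywise: the only nonzero entries are in positions $(j+1,j)$, and one gets $q^{i-j}$ on one side versus $q\cdot q^{i-j-1}=q^{i-j}$ on the other, so they agree. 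Since all three relations hold, $\rho_{l,i}$ is a well-defined algebra map, and one defines $M(l,i)$ to be $k^l$ with the $H_n(q)$-action pulled back along $\rho_{l,i}$.

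There is essentially no obstacle here; the statement is a definition-plus-verification, and the only thing requiring minimal care is bookkeeping the indices in the commutation relation and remembering that $q^n=1$ is what makes $g^n$ act as the identity. It is worth noting for later use that the action makes $M(l,i)$ uniserial: $\mathrm{rad}^s(M(l,i))=h^s\cdot M(l,i)$ is spanned by the last $l-s$ basis vectors, and the successive radical quotients are the one-dimensional simples $S_{i}, S_{i-1},\dots,S_{i-l+1}$ read off from the diagonal of $\rho_{l,i}(g)$. This observation will be the bridge to identifying the $M(l,i)$ with the indecomposables of \cite{Cib, Gunn} and to setting up the notation $M(l,r)$ with $r\in\mathbb{Z}_n$ used in the rest of the paper.
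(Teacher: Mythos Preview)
Your proof is correct and is exactly the ``straightforward verification'' the paper alludes to: you check the three defining relations $g^n=1$, $h^n=0$, $hg=qgh$ on the given matrices, which is all that is required. The additional remark about uniseriality anticipates Lemma~\ref{head-soc} but is not needed here.
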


\begin{proof}
It follows from a straightforward verification.
\end{proof}

There is a $k$-basis $\{v_1, v_2, \cdots, v_l\}$ of $M(l, i)$ such that
$g\cdot v_j=q^{i-j+1}v_j$ for all $1\leqslant j\leqslant l$ and
$$h\cdot v_j=\left\{\begin{array}{ll}
v_{j+1},& 1\leqslant j\leqslant l-1,\\
0,& j=l.\\
\end{array}\right.
$$
Hence we have $v_j=h^{j-1}\cdot v_1$ for all $2\leqslant j\leqslant l$.
Such a basis is called {\it standard basis} of $M(l, i)$.
For any integer $i$, we will often regard $i$ as its image under
the canonical projection ${\mathbb Z}\ra{\mathbb Z}_n:={\mathbb Z}/(n)$.
We have the following lemma.

\begin{lemma}\label{head-soc}
For any $1\leqslant l\leqslant n$ and $i\in\mathbb{Z}$, let $M(l,i)$ be the
$H_n(q)$-module defined as in Lemma \ref{MStruc1}. Then
\begin{enumerate}
\item ${\rm soc}(M(l,i))=kv_l\cong S_{i-l+1}$ and
$M(l,i)/{\rm rad}(M(l,i))\cong S_{i}$.\\
\item $M(l, i)$ is indecomposable and uniserial.\\
\item If $1\leqslant l'\leqslant n$ and $i'\in\mathbb{Z}$, then
$M(l, i)\cong M(l', i')$ if and only if $l'=l$ and $i'=i$
in ${\mathbb Z}_n$.
\end{enumerate}
\end{lemma}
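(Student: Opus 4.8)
The plan is to verify the three assertions directly from the explicit matrix description of $M(l,i)$ and its standard basis $\{v_1,\dots,v_l\}$, making essential use of the fact that $\mathrm{rad}^s(M)=h^s\cdot M$ for any $H_n(q)$-module $M$. First, for (1), I would compute $h\cdot M(l,i)=\mathrm{span}\{v_2,\dots,v_l\}$ directly from the action $h\cdot v_j=v_{j+1}$ (and $h\cdot v_l=0$), so $\mathrm{rad}(M(l,i))=hM(l,i)=kv_2\oplus\cdots\oplus kv_l$ and hence $M(l,i)/\mathrm{rad}(M(l,i))=k\bar v_1$. Since $g\cdot v_1=q^iv_1$ and $h$ acts as zero on the quotient, the head is $S_i$ by Lemma~\ref{sim}. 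Similarly $\mathrm{rad}^{l-1}(M(l,i))=h^{l-1}M(l,i)=kv_l$ and $h^l M(l,i)=0$, so $kv_l$ is a simple submodule; since $g\cdot v_l=q^{i-l+1}v_l$ and $h\cdot v_l=0$, it is $S_{i-l+1}$, and because $M(l,i)$ will be shown uniserial this is the whole socle.

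For (2), the chain $M(l,i)\supseteq hM(l,i)\supseteq h^2M(l,i)\supseteq\cdots\supseteq h^{l-1}M(l,i)\supseteq 0$ has successive quotients each one-dimensional (spanned by the images of $v_1,v_2,\dots,v_l$ respectively), so it is a composition series of length $l=\dim M(l,i)$. Since $\mathrm{rad}^s(M(l,i))=h^sM(l,i)$, this radical series coincides with the unique composition series, which forces every submodule to be one of the $h^sM(l,i)$; in particular the submodule lattice is a chain, so $M(l,i)$ is uniserial and a fortiori indecomposable (a uniserial module has local endomorphism ring / simple head and socle).

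For (3), one direction is trivial: if $l'=l$ and $i'\equiv i\pmod n$ then the matrices $\rho_{l,i}$ and $\rho_{l',i'}$ are literally equal (the entries $q^{i},q^{i-1},\dots$ depend only on the residue class mod $n$), so the modules are isomorphic. Conversely, if $M(l,i)\cong M(l',i')$ then comparing dimensions gives $l=l'$, and comparing heads via part (1) gives $S_i\cong S_{i'}$, hence $i\equiv i'\pmod n$ by Lemma~\ref{sim}. I do not anticipate a serious obstacle here; the only point requiring a little care is making the identification of the radical/socle layers with the specific simples $S_j$ precise, i.e. tracking the eigenvalue of $g$ on each one-dimensional layer, and observing that uniseriality is what upgrades the computation of a single simple submodule in part (1) to a computation of the full socle.
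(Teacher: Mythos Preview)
Your proposal is correct and follows essentially the same route as the paper: compute $\mathrm{rad}(M(l,i))=h\cdot M(l,i)$ and the socle via the explicit $h$-action, identify the one-dimensional layers as $S_j$ by the $g$-eigenvalue, and deduce uniseriality from the equality of Loewy length and composition length. The only cosmetic difference is that the paper computes $\mathrm{soc}(M(l,i))=\{v:h\cdot v=0\}=kv_l$ directly (avoiding your forward reference to uniseriality) and then obtains indecomposability in (2) from the simplicity of the socle rather than from uniseriality.
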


\begin{proof}
(1) Since $J(H_n(q))=(h)=hH_n(q)=H_n(q)h$, soc$(M(l,i))=\{v\in M(l,i)|h\cdot v=0\}
=kv_l$ and rad$(M(l,i))=h\cdot M(l,i)={\rm span}\{v_2, \cdots, v_l\}$.
It follows that soc$(M(l,i))\cong S_{i-l+1}$ and $M(l,i)/{\rm rad}(M(l, i))
\cong S_{i}$.

(2) By (1), soc$(M(l,i))$ is simple, and hence $M(l, i)$ is indecomposable.
Since $h^{l-1}\cdot M(l,i)\neq0$ and $h^l\cdot M(l,i)=0$, rl$(M(l,i))=l$.
Hence $l(M(l,i))={\rm rl}(M(l,i))$, and so $M(l,i)$ is uniserial.

(3) Obvious.
\end{proof}
As a consequence, we obtain the following:
\begin{corollary}\label{pro-inj}
Let $1\leqslant l\leqslant n$ and $i\in{\mathbb Z}_n$. Then\\
\begin{enumerate}
\item$M(l,i)$ is simple if and only if $l=1$. In this case,
$M(1, i)\cong S_i$.\\
\item $M(l,i)$ is projective (injective) if and only if $l=n$.\\
\item $M(n,i)\cong P(S_i)\cong I(S_{i+1})$.
\end{enumerate}
\end{corollary}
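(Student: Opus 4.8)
The plan is to deduce all three parts of \coref{pro-inj} from the structural information already established in \leref{head-soc}, together with the fact (noted just before \leref{sim}) that $J(H_n(q))$ is generated by $h$, so that $\operatorname{rad}^s(M)=h^s\cdot M$ and the Loewy length of $H_n(q)$ is $n$.

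For part (1), since every simple $H_n(q)$-module is $1$-dimensional by \leref{sim}, $M(l,i)$ can be simple only if $\dim M(l,i)=l=1$; conversely, when $l=1$ the matrix representation $\rho_{1,i}$ sends $h$ to $0$ and $g$ to $q^i$, so $M(1,i)$ is exactly the $1$-dimensional module $S_i$ of \leref{sim}.

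For part (2), first I would show that $M(n,i)$ is projective: it suffices to exhibit $H_n(q)$ itself as a direct sum of copies of the various $M(n,j)$, or equivalently to identify $P(S_i)$ with $M(n,i)$. Since $kG(H_n(q))$ is semisimple with the $1$-dimensional modules $S_0,\dots,S_{n-1}$, the algebra $H_n(q)$ decomposes as a left module over itself into indecomposable projectives $P(S_i)$; each $P(S_i)$ has simple top $S_i$, Loewy length at most $n$ (the Loewy length of $H_n(q)$), and total dimension satisfying $\sum_i \dim P(S_i)=n^2$. Because $h^{n-1}\neq 0$ in $H_n(q)$ and multiplication by $h$ shifts the $g$-eigenvalue by one power of $q$, each $P(S_i)$ has Loewy series $S_i, S_{i-1},\dots,S_{i-n+1}$, forcing $\dim P(S_i)=n$ and $P(S_i)\cong M(n,i)$ by comparing the explicit $g$- and $h$-actions with those on the standard basis of $M(n,i)$. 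This proves projectivity of $M(n,i)$ and hence the ``if'' direction for projectivity; the ``only if'' direction follows because a projective indecomposable has $S_i$ both as a composition factor of its top and, via the structure of $H_n(q)$, cannot have length $<n$ unless it is zero — more simply, $M(l,i)$ with $l<n$ is a proper quotient of $M(n,i)$ by \leref{head-soc}(2), and a proper nonzero quotient of an indecomposable projective is never projective over a finite-dimensional algebra that is not semisimple. Injectivity is handled dually, or more cheaply by invoking that $H_n(q)$ is a Hopf algebra and therefore a Frobenius algebra, so projective and injective modules coincide; then $M(l,i)$ is injective iff $l=n$.

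For part (3), the isomorphism $M(n,i)\cong P(S_i)$ is exactly what the dimension/Loewy-series argument in part (2) establishes. For the injective hull: $M(n,i)$ is injective by part (2), and by \leref{head-soc}(1) its socle is $S_{i-n+1}\cong S_{i+1}$ in ${\mathbb Z}_n$ (since $q^n=1$ means the index is read mod $n$), so $M(n,i)$ is an injective module with simple socle $S_{i+1}$, hence $M(n,i)\cong I(S_{i+1})$.

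I expect the main obstacle to be part (2): pinning down that $\dim P(S_i)=n$ rigorously, i.e.\ verifying that the projective cover of each simple has full Loewy length $n$ and that no two distinct $P(S_i)$ coincide. This amounts to a careful bookkeeping of $g$-eigenspaces under repeated multiplication by $h$ inside $H_n(q)=\bigoplus_{0\le i,j\le n-1} kg^i h^j$, using $hg=qgh$ and $h^n=0$; once that combinatorial count is in place, the identification with $M(n,i)$ and the rest of the corollary are formal. Invoking the Frobenius property of $H_n(q)$ to get injectivity for free is the cleanest way to avoid duplicating the argument.
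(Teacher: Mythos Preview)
Your proposal is correct, and the overall architecture---identify the indecomposable projectives with the $M(n,i)$, use the Frobenius property for injectivity, and read off the socle from \leref{head-soc}---matches the paper. The execution of part~(2) differs, however. The paper avoids your ``main obstacle'' entirely by writing down explicit primitive idempotents
\[
e_i=\frac{1}{n}\sum_{j=0}^{n-1}q^{-ij}g^j,
\]
checking directly that $ge_i=q^ie_i$ and $h^{n-1}e_i\neq 0$, and concluding that $H_n(q)e_i=\mathrm{span}\{e_i,he_i,\dots,h^{n-1}e_i\}\cong M(n,i)$. This gives the regular decomposition $H_n(q)\cong\bigoplus_i M(n,i)$ in one stroke, with no dimension-counting or Loewy bookkeeping needed. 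Your route---observe that each $P(S_i)$ is cyclic, hence spanned by $\{h^jv\}_{0\le j\le n-1}$ and so of dimension $\le n$, then use $\sum_i\dim P(S_i)=n^2$ to force equality---is a perfectly valid alternative, and arguably more conceptual; but note that the sentence ``because $h^{n-1}\neq 0$ \dots\ each $P(S_i)$ has Loewy series $S_i,\dots,S_{i-n+1}$'' is not quite the right logic (nonvanishing of $h^{n-1}$ in the algebra does not by itself give $h^{n-1}v\neq 0$), and it is really the dimension count that does the work. For the ``only if'' in~(2), the paper simply observes that the $n$ modules $M(n,0),\dots,M(n,n-1)$ exhaust the indecomposable projectives, so no $M(l,i)$ with $l<n$ can be among them; your splitting argument for quotients of indecomposable projectives is an equally clean alternative.
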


\begin{proof}
(1): Follows from Lemma \ref{head-soc}(1).

(2) and (3): Note that any finite dimensional Hopf algebra is a Frobenius algebra,
and hence is a self-injective algebra. If $l=n$, then it follows from
\cite[Lemma 3.5]{Ch3} that $M(n,i)$ is projective and injective.

For any $0\leqslant i\leqslant n-1$, let $e_i=\frac{1}{n}\sum\limits_{j=0}^{n-1}q^{-ij}g^j$.
Then $\{e_0, e_1, \cdots, e_{n-1}\}$ is a set of orthogonal idempotents
such that $\sum\limits_{i=0}^{n-1}e_i=1$. We also have $ge_i=q^ie_i$ and $h^{n-1}e_i\neq 0$.
Therefore, $H_n(q)e_i={\rm span}\{e_i, he_i, \cdots, h^{n-1}e_i\}\cong M(n,i)$.
Thus, we have a decomposition of the regular module $H_n(q)$ as follows:
$$H_n(q)=\bigoplus\limits_{i=0}^{n-1}H_n(q)e_i\cong\bigoplus\limits_{i=0}^{n-1}M(n, i).$$
Hence $M(n,i)\cong P(S_i)$, and $M(n, 0)$, $M(n, 1)$, $\cdots$, $M(n, n-1)$ are
all non-isomorphic indecomposable projective $H_n(q)$-modules. So (2) and (3) follow
from Lemma \ref{head-soc}.
\end{proof}

Since the indecomposable projective $H_n(q)$-modules are uniserial,
any indecomposable $H_n(q)$-module is uniserial and is isomorphic to
a quotient of a indecomposable projective module.
Thus, we have the following theorem (see \cite[Page 467]{Cib}).

\begin{theorem}\label{IMClass}
Up to isomorphism, there are $n^2$ indecomposable finite dimensional
$H_n(q)$-modules as follows
$$\{M(l, i)|1\leqslant l\leqslant n, 0\leqslant i\leqslant n-1\}.$$   \hfill$\Box $
\end{theorem}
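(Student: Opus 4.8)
The list $\{M(l,i)\mid 1\le l\le n,\ 0\le i\le n-1\}$ already consists of $n^2$ pairwise non-isomorphic indecomposables, by Lemma~\ref{head-soc}(2) and (3). So the whole content of the theorem is the reverse inclusion: every finite dimensional indecomposable $H_n(q)$-module $M$ is isomorphic to some $M(l,i)$ with $1\le l\le n$. The plan is to reduce this to the statement that $M$ is uniserial, and then to identify the uniserial modules with the quotients of the indecomposable projectives $M(n,i)$.

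The first (and only substantial) step is to justify the claim in the remark preceding the theorem, that every indecomposable $H_n(q)$-module is uniserial. One way is to observe that $H_n(q)$ is a Nakayama (serial) algebra: Corollary~\ref{pro-inj} shows the indecomposable projective \emph{left} modules $M(n,i)=H_n(q)e_i$ are uniserial, and the same computation applied to $e_iH_n(q)$ --- using again $J(H_n(q))^s=h^sH_n(q)=H_n(q)h^s$, so that $e_iH_n(q)\supsetneq e_ihH_n(q)\supsetneq\cdots\supsetneq e_ih^{n-1}H_n(q)\supsetneq 0$ has one-dimensional quotients --- shows the indecomposable projective \emph{right} modules are uniserial too; Nakayama's theorem then gives that every indecomposable module is uniserial. (Alternatively, since the antipode $S$ is an algebra anti-automorphism of $H_n(q)$, the right-module situation is a mirror image of the left-module one.) A more self-contained alternative avoids citing Nakayama: $g$ acts semisimply with eigenvalues among the powers of $q$, and the relation $hg=qgh$ forces $h$ to send the $q^j$-eigenspace of $g$ into the $q^{j-1}$-eigenspace; thus an $H_n(q)$-module is precisely a $\mathbb Z_n$-graded module over $k[h]/(h^n)$ with $h$ homogeneous of degree $-1$, and a graded version of the Jordan normal form writes any such module as a direct sum of ``interval'' modules, each of which is visibly one of the $M(l,i)$ with $1\le l\le n$.

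Granting that $M$ is uniserial, its top $M/{\rm rad}(M)$ is simple, say isomorphic to $S_i$, so the projective cover of $M$ is the indecomposable projective $P(S_i)\cong M(n,i)$ of Corollary~\ref{pro-inj} and $M$ is a quotient of $M(n,i)$. Being uniserial of length $n$, $M(n,i)$ has as its only submodules the terms ${\rm rad}^s(M(n,i))=h^sM(n,i)={\rm span}\{v_{s+1},\dots,v_n\}$ of its radical series; hence $M\cong M(n,i)/h^lM(n,i)$ with $l={\rm dim}(M)$, $1\le l\le n$. Reading off how $g$ and $h$ act on the images of $v_1,\dots,v_l$ in this quotient recovers exactly the matrices $\rho_{l,i}(g)$ and $\rho_{l,i}(h)$ of Lemma~\ref{MStruc1}, i.e. $M(n,i)/h^lM(n,i)\cong M(l,i)$. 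Therefore every indecomposable module is some $M(l,i)$, and by Lemma~\ref{head-soc}(3) these split into exactly $n^2$ isomorphism classes.

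The main obstacle is precisely the passage from ``the indecomposable projectives are uniserial'' to ``every indecomposable module is uniserial'': invoking Nakayama's theorem sidesteps it, but a fully self-contained treatment requires either reproving that implication or carefully setting up and justifying the $\mathbb Z_n$-graded reformulation over $k[h]/(h^n)$ together with its graded Jordan form. After that, the argument is just bookkeeping with the radical series of $M(n,i)$, which has already been carried out in Lemma~\ref{head-soc} and Corollary~\ref{pro-inj}.
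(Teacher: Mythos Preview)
Your proposal is correct and follows essentially the same route as the paper: the paper's entire argument is the one-sentence remark preceding the theorem, namely that the indecomposable projectives are uniserial, hence every indecomposable is uniserial and therefore a quotient of some $M(n,i)$, with the result then attributed to \cite{Cib}. You have simply made explicit the step the paper leaves implicit---that passing from ``all indecomposable projectives are uniserial'' to ``all indecomposables are uniserial'' requires either Nakayama's theorem (and hence a check on the right projectives) or a direct argument such as your $\mathbb Z_n$-graded reformulation over $k[h]/(h^n)$.
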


\section{\bf The Green Ring of Taft Algebra $H_n(q)$}\selabel{4}

We have already known that there are $n^2$ non-isomorphic
indecomposable modules over $H_n(q)$. They are
$$\{M(l,r)|1\leqslant l\leqslant n, r\in\mathbb{Z}_n\}.$$

The following lemma follows from a straightforward verification.

\begin{lemma}\label{1-l}
Let $1\leqslant l\leqslant n$ and $r, r'\in\mathbb{Z}_n$. Then
$$M(l,r)\ot S_{r'}\cong S_{r'}\ot M(l,r)\cong M(l, r+r')$$
as $H_n(q)$-modules. In particular, $S_r\otimes S_{r'}\cong S_{r+r'}$
and $M(l, r)\cong S_r\ot M(l,0)\cong M(l,0)\ot S_r$. \hfill$\Box$
\end{lemma}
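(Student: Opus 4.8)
The plan is to compute directly with the standard bases from Lemma \ref{MStruc1} together with the coalgebra structure of $H_n(q)$. Fix a standard basis $\{v_1,\dots,v_l\}$ of $M(l,r)$, so that $g\cdot v_j=q^{r-j+1}v_j$, $h\cdot v_j=v_{j+1}$ for $1\leq j\leq l-1$ and $h\cdot v_l=0$; and fix a spanning vector $w$ of the one-dimensional module $S_{r'}$, so that $g\cdot w=q^{r'}w$ and $h\cdot w=0$. I would then evaluate the $H_n(q)$-action on the $k$-bases $\{w\ot v_j\}$ of $S_{r'}\ot M(l,r)$ and $\{v_j\ot w\}$ of $M(l,r)\ot S_{r'}$ using $\Delta(g)=g\ot g$ and $\Delta(h)=1\ot h+h\ot g$, and match each with the defining representation $\rho_{l,r+r'}$ of $M(l,r+r')$.

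For $S_{r'}\ot M(l,r)$: since $h\cdot w=0$, the term $h\ot g$ of $\Delta(h)$ drops out, so $g\cdot(w\ot v_j)=q^{(r+r')-j+1}(w\ot v_j)$ and $h\cdot(w\ot v_j)=w\ot(h\cdot v_j)$, which is $w\ot v_{j+1}$ for $j<l$ and $0$ for $j=l$. Hence $\{w\ot v_j\}_{1\leq j\leq l}$ already satisfies the relations of a standard basis, and by the construction in Lemma \ref{MStruc1} the module it spans is $M(l,r+r')$. For $M(l,r)\ot S_{r'}$: now the $1\ot h$ term drops out, giving $g\cdot(v_j\ot w)=q^{(r+r')-j+1}(v_j\ot w)$ and $h\cdot(v_j\ot w)=q^{r'}(v_{j+1}\ot w)$ for $j<l$, $h\cdot(v_l\ot w)=0$. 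A rescaling $u_j:=q^{(j-1)r'}(v_j\ot w)$ absorbs the stray factors $q^{r'}$, so that $g\cdot u_j=q^{(r+r')-j+1}u_j$, $h\cdot u_j=u_{j+1}$ for $j<l$ and $h\cdot u_l=0$; thus $\{u_j\}$ is a standard basis and $M(l,r)\ot S_{r'}\cong M(l,r+r')$.

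This establishes the displayed isomorphisms, and the \emph{in particular} statements are immediate specializations: $S_r\cong M(1,r)$ by Corollary \ref{pro-inj}(1) gives $S_r\ot S_{r'}\cong M(1,r+r')\cong S_{r+r'}$, and putting $r=0$ (and renaming) in the main assertion yields $M(l,r)\cong S_r\ot M(l,0)\cong M(l,0)\ot S_r$.

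The computation is routine; the one point worth flagging is that commutativity of $\ot$ is not formal over the noncommutative, noncocommutative Hopf algebra $H_n(q)$ — in particular the naive flip $v_j\ot w\mapsto w\ot v_j$ is \emph{not} $H_n(q)$-linear in general (it fails on the $h$-action unless $q^{r'}=1$). The isomorphism must instead be produced by the twist by the scalars $q^{(j-1)r'}$ above, so the only (mild) obstacle is bookkeeping the correct rescaling and reading off the head eigenvalue $q^{r+r'}$ to identify the target as $M(l,r+r')$ rather than some other $M(l',i')$.
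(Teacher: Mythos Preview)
Your proof is correct and is precisely the kind of direct computation the paper has in mind when it writes ``follows from a straightforward verification'' and omits the details; your identification of the rescaling $u_j=q^{(j-1)r'}(v_j\ot w)$ is the right (and only nontrivial) step in making the verification go through for $M(l,r)\ot S_{r'}$.
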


Cibils and Gunnlaugsd${\rm\acute{o}}$ttir derived the decomposition
formulas of the tensor product of two indecomposable modules over
$kZ_n(q)/I_n$ and the half-quantum group $u^+_q$ in \cite{Cib} and \cite{Gunn},
respectively. From \cite[Theorem 4.1]{Cib} or \cite[Theorem 3.1]{Gunn},
one gets the following Propositions \ref{pro}, \ref{cor<n} and \ref{cor>n}.

\begin{proposition}\label{pro}
Let $2\leqslant l\leqslant n$ and $r, r'\in{\mathbb Z}_n$.
Then we have the $H_n(q)$-module isomorphisms
$$M(l,r)\ot M(n, r')\cong M(n, r')\ot M(l, r)
\cong \bigoplus\limits_{i=1}^lM(n, r+r'+i-l).\ \ \ \ \ \ \ \  {\ \ \ \ \ \ \ \ \Box}$$
\end{proposition}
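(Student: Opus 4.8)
The plan is to prove the isomorphism by producing an explicit generating set for the module $M(l,r)\otimes M(n,r')$ that visibly decomposes it into the claimed sum of projective-injective modules $M(n,\,r+r'+i-l)$. First I would pass, via \leref{1-l}, to the case $r=r'=0$: since $M(l,r)\cong S_r\otimes M(l,0)$ and $M(n,r')\cong S_{r'}\otimes M(n,0)$, and $S_r\otimes S_{r'}\cong S_{r+r'}$, tensoring the $(l,0)\otimes(n,0)$ case with $S_{r+r'}$ shifts every index by $r+r'$, which is exactly what the right-hand side predicts. Commutativity of the tensor product here is automatic because $H_n(q)$ is "almost cocommutative" in the weak sense needed — more simply, one checks the flip $M\otimes N\to N\otimes M$ is an $H_n(q)$-module map after composing with the known isomorphism, or one just notes that both sides are computed to be the same multiset of indecomposables and all indecomposables are determined by $(\dim,\text{top})$, so $M\otimes N$ and $N\otimes M$ must agree.

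For the core case, let $\{v_1,\dots,v_l\}$ be a standard basis of $M(l,0)$ and $\{w_1,\dots,w_n\}$ a standard basis of $M(n,0)$, so $g\cdot v_a=q^{1-a}v_a$, $g\cdot w_b=q^{1-b}w_b$, with $h$ acting as the lower shift on each (and zero on the bottom vector). The coproduct $\Delta(h)=1\otimes h+h\otimes g$ governs the action on the tensor product: $h\cdot(v_a\otimes w_b)=v_a\otimes w_{b+1}+q^{1-b}\,v_{a+1}\otimes w_b$ (with the convention that a vector with out-of-range index is $0$), and $g$ acts diagonally with eigenvalue $q^{2-a-b}$. Since $M(n,0)$ is projective, every summand of $M(l,0)\otimes M(n,0)$ is projective, hence a direct sum of $M(n,\ast)$'s; the dimension count $ln$ forces exactly $l$ summands. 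To pin down which ones, it suffices to locate $l$ "generators": vectors $u_1,\dots,u_l$ that are $g$-eigenvectors, are killed by no power of $h$ below the $n$-th, and whose $g$-eigenvalues (equivalently, the simple tops $M(n,\ast)/\mathrm{rad}$) are $q^{i-l}$ for $i=1,\dots,l$, i.e. the tops are $S_{i-l}$. A natural choice is $u_i = v_1\otimes w_i$ for $i=1,\dots,l$ (these span the top $M(l,0)\otimes w_i$ layer appropriately): $g\cdot u_i = q^{1-i}u_i$, and one verifies $h^{n-1}\cdot u_i\neq 0$ by direct computation with the formula above, using $(n-1)!_q\neq 0$ so that no $q$-binomial coefficient obstruction occurs. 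Then $H_n(q)u_i\cong M(n,\,1-i)$, and comparing tops shows the $H_n(q)u_i$ are pairwise non-isomorphic hence their sum is direct; by dimensions it exhausts the tensor product. Matching $1-i$ for $i=1,\dots,l$ against $r+r'+i-l$ for $i=1,\dots,l$ in the shifted case (reindex $i\mapsto l+1-i$) gives the stated form.

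\emph{Alternative, cleaner route.} Rather than computing $h^{n-1}\cdot u_i$ by hand, I can argue homologically: $M(l,0)\otimes M(n,0)$ is projective, so it is determined by its composition factors (equivalently by its top, since each $M(n,j)$ has top $S_j$). The Brauer-character / composition-factor count is easy: the eigenvalues of $g$ on $M(l,0)\otimes M(n,0)$ are $q^{2-a-b}$ with $1\le a\le l$, $1\le b\le n$, and collecting these by residue mod $n$ shows each simple $S_j$ occurs exactly $l$ times, while the top (the $g$-action on the quotient by $\mathrm{rad}=h\cdot(-)$) picks out the residues $\{2-a-1 : 1\le a\le l\}=\{1-l,2-l,\dots,0\}$, each once. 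Hence the projective cover is $\bigoplus_{i=1}^{l}M(n,\,i-l)$ (with $r=r'=0$), and a projective module equals its projective cover. Shifting indices by $r+r'$ finishes it. The main obstacle in either approach is the bookkeeping that $h^{n-1}$ does not annihilate the chosen generators — i.e., that each cyclic submodule really has Loewy length $n$ and is therefore projective — which is where primitivity of $q$ (via $(n-1)!_q\neq 0$ and $(i)!_q\neq 0$ for $0<i<n$) is essential; with the homological reformulation this obstacle dissolves into the elementary observation that a projective module over a Frobenius algebra is the projective cover of its top.
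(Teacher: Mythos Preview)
The paper gives no proof of this proposition; it simply cites \cite[Theorem~4.1]{Cib} and \cite[Theorem~3.1]{Gunn} and closes with a $\Box$. So your proposal already contains more than the paper offers, and the overall strategy --- reduce to $r=r'=0$ via \leref{1-l}, use that tensoring with the projective $M(n,0)$ yields a projective module, and then identify the summands via their tops or socles --- is sound and is essentially how the cited references argue.

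A few points should be tightened. Your claim that $H_n(q)$ is ``almost cocommutative in the weak sense needed'' and that the flip becomes an $H_n(q)$-map is false: the paper's final remark records that $H_n(q)$ is not even almost cocommutative for $n>2$, and the naive flip does not respect the $h$-action because $\Delta(h)=1\otimes h+h\otimes g$ is asymmetric. Your fallback --- that both orders yield the same multiset of indecomposable summands --- is the right argument, but then you must actually run the computation for $M(n,0)\otimes M(l,0)$ as well; the generators $w_i\otimes v_1$ work verbatim.

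In your first route the inference ``pairwise non-isomorphic hence their sum is direct'' is not valid for arbitrary submodules. What makes it work here is that each $H_n(q)u_i$ has simple socle $k\,h^{n-1}u_i$, and these lie in distinct $g$-eigenspaces (eigenvalue $q^{2-i}$ for $i=1,\dots,l$). Cleaner still: the tensor product is projective of dimension $ln$, so its socle is $l$-dimensional; the $l$ independent vectors $h^{n-1}u_i$ therefore span it, giving $\mathrm{soc}=\bigoplus_{i=1}^{l}S_{2-i}$, and since $\mathrm{soc}\,M(n,j)\cong S_{j+1}$ the decomposition follows at once.

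Your ``alternative route'' has a genuine gap as written. The composition-factor count is useless here: every indecomposable projective $M(n,j)$ has exactly the same composition factors ($S_0,\dots,S_{n-1}$, each once), so knowing that each $S_j$ occurs $l$ times in $M(l,0)\otimes M(n,0)$ says nothing about which $M(n,j)$'s appear. You then assert without justification that the top has $g$-eigenvalues $\{1-l,\dots,0\}$. This does require an honest computation; note in particular that $h\cdot M$ is the image of the \emph{single} operator $1\otimes h+h\otimes g$, not the subspace $M(l,0)\otimes\mathrm{rad}\,M(n,0)+\mathrm{rad}\,M(l,0)\otimes M(n,0)$ (the latter has codimension $1$, not $l$). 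Either compute the cokernel of $h$ directly, or fall back on the socle computation above.
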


\begin{proposition}\label{cor<n}
Let $1\leqslant l, l'<n$ and $r, r'\in{\mathbb Z}_n$.
If $l+l'\leqslant n$, then
$$M(l,r)\ot M(l',r')\cong\bigoplus\limits_{i=1}^{l_0}
M(|l-l'|-1+2i,r+r'+i-l_0),$$
where $l_0={\rm min}\{l, l'\}$. \ \ \ \ \ \ \ \ \ \ \ \ \ \  \ \ \ \ \ \   $\Box$
\end{proposition}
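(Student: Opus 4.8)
The plan is to compute the tensor product $M(l,r)\ot M(l',r')$ by realizing both modules explicitly via the matrix representations $\rho_{l,i}$ of Lemma \ref{MStruc1} and then decomposing the resulting $H_n(q)$-module on $k^l\ot k^{l'}$ into uniserial indecomposables. Using Lemma \ref{1-l} we may twist out the scalars: $M(l,r)\ot M(l',r')\cong S_{r+r'}\ot M(l,0)\ot M(l',0)$, so it suffices to treat the case $r=r'=0$ and then tensor the answer by $S_{r+r'}$ at the end, which shifts the second index of every summand by $r+r'$. Write $l_0=\min\{l,l'\}$ and assume without loss of generality $l_0=l\le l'$ (the general bound $|l-l'|-1+2i$ is symmetric in $l,l'$, as it must be since tensor product is commutative here).

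First I would describe the module $W:=M(l,0)\ot M(l',0)$ concretely. On the standard bases $\{v_1,\dots,v_l\}$ and $\{w_1,\dots,w_{l'}\}$, the grouplike $g$ acts diagonally with $g\cdot(v_a\ot w_b)=q^{2-a-b}(v_a\ot w_b)$, using $\Delta(g)=g\ot g$; so the $g$-eigenvalues on $W$ are $q^{2-a-b}$ for $1\le a\le l$, $1\le b\le l'$, and since $l+l'\le n$ the exponents $2-a-b$ range over $[2-l-l',0]$, an interval of length $l+l'-1<n$, hence all distinct mod $n$ exactly when the pair-sum $a+b$ is fixed, with multiplicity equal to the number of such pairs. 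The action of $h$ is $\Delta(h)=1\ot h+h\ot g$, so $h\cdot(v_a\ot w_b)=v_a\ot h w_b+q^{1-b}(h v_a)\ot w_b$, i.e. $h$ sends the $(a,b)$-vector to a combination of $(a,b+1)$ and $(a+1,b)$ — it raises the pair-sum $a+b$ by one. Thus $W$ is graded by $s:=a+b\in\{2,\dots,l+l'\}$, with the $s$-graded piece $W_s$ of dimension $\min\{s-1,\,l,\,l'\,,\,l+l'+1-s\}$, and $h$ maps $W_s\to W_{s+1}$.

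The core of the proof is to show that $W\cong\bigoplus_{i=1}^{l} M(|l-l'|-1+2i,\ i-l)$. I would do this by induction on $l$ (or directly by a dimension/socle count). The key structural facts are: (i) since $H_n(q)$ has Loewy length $n$ and each $M(m,j)$ is uniserial of length $m$ with one-dimensional socle $\cong S_{j-m+1}$ and top $\cong S_j$ (Lemma \ref{head-soc}), a decomposition of $W$ is pinned down by knowing $\dim\operatorname{soc}(W)$, $\dim(h^t\cdot W)$ for all $t$, and the $g$-weights of the tops; (ii) $\operatorname{soc}(W)=\{x\in W: h\cdot x=0\}$ is exactly $W_{l+l'}$ together with the kernel of $h$ on lower graded pieces, and a direct computation with the two-term formula for $h$ shows $h$ is injective on each $W_s$ with $s\le l$ wait — more carefully, $h|_{W_s}:W_s\to W_{s+1}$ is injective precisely when $\dim W_s\le\dim W_{s+1}$, which happens for $s< (l+l'+2)/2$ roughly, so $\operatorname{soc}(W)$ sits in the top gradings and has dimension $l=l_0$; hence $W$ has exactly $l$ indecomposable summands. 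Then the length of the $i$-th summand is read off from how far up the Loewy filtration each socle generator can be lifted, giving lengths $|l-l'|+1, |l-l'|+3,\dots,|l-l'|+2l-1$, and the top weights follow from tracking $g$-eigenvalues; finally retwisting by $S_{r+r'}$ yields the stated second indices $r+r'+i-l_0$.

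The main obstacle is step (iii): verifying that the rank of $h^t$ on $W$ matches the rank predicted by $\bigoplus_i M(|l-l'|-1+2i, i-l)$ for every $t$, equivalently that the two-diagonal operator $h$ on the rectangular grid $\{(a,b)\}$ has the Jordan-type structure claimed. This is essentially a lemma about the nilpotent operator "add adjacent entries along anti-diagonals" on an $l\times l'$ grid with the $q$-twist $q^{1-b}$; the $q$-twist is harmless here because $l+l'\le n$ forces all relevant $q$-powers to be nonzero and the matrix $\bigl(\begin{smallmatrix} m\\ i\end{smallmatrix}\bigr)_q$-type coefficients that appear are units, so the rank computation reduces to the classical ($q=1$) case, where it is the standard Clebsch–Gordan rank pattern. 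I would isolate this as a short linear-algebra lemma and then assemble the Loewy data to conclude. $\Box$
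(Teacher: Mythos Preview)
The paper does not actually prove this proposition; it simply cites \cite[Theorem~4.1]{Cib} and \cite[Theorem~3.1]{Gunn} and appends a $\Box$. Your proposal therefore goes well beyond the paper's own treatment, and its overall architecture --- reducing to $r=r'=0$ via Lemma~\ref{1-l}, grading $W=M(l,0)\otimes M(l',0)$ by the anti-diagonal degree $s=a+b$, observing that $h$ raises $s$ by one, and then reading off the uniserial summands from the Jordan data of $h$ --- is sound and is essentially the framework used in the cited references.

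The genuine gap is in your treatment of the ``main obstacle''. The claim that the rank computation ``reduces to the classical ($q=1$) case'' because the relevant $q$-coefficients are units is not a proof: two matrices with the same zero/nonzero pattern need not have the same rank. Your bidiagonal analysis does show that each single step $h|_{W_s}:W_s\to W_{s+1}$ has maximal rank, which legitimately yields $\dim\operatorname{soc}(W)=l_0$ and identifies the $g$-weights on both head and socle; but this alone does \emph{not} determine the partition of $ll'$ into summand lengths --- there are several bijections between the head-weights $S_0,\dots,S_{1-l}$ and the socle-weights $S_{1-l'},\dots,S_{2-l-l'}$ consistent with the total dimension. What must still be established is that the iterated map $h^{\,l+l'+1-2s}:W_s\to W_{l+l'+2-s}$ is an isomorphism for every $2\le s\le l+1$. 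One honest way is to expand $\Delta(h^t)=\sum_j\binom{t}{j}_{q}\,h^j\otimes g^jh^{t-j}$ and verify directly that the resulting square matrices are invertible; this does come down to nonvanishing of $q$-binomials with top entry at most $l+l'-1<n$, which is exactly where the hypothesis $l+l'\le n$ enters. Your alternative of induction on $l_0$ also works but is not just a socle count: you must control how the extension obtained by tensoring $0\to M(l-1,-1)\to M(l,0)\to S_0\to 0$ with $M(l',0)$ interacts with the inductively known summands. Either route closes the gap, but it has to be written out rather than asserted.
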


\begin{proposition}\label{cor>n}
Let $1\leqslant l, l'<n$ and $r, r'\in{\mathbb Z}_n$.
If $l+l'>n$, then
$$M(l,r)\ot M(l',r')\cong(\bigoplus\limits_{i=1}^{n-l_1}
M(|l-l'|-1+2i,r+r'+i-l_0))
\bigoplus(\bigoplus\limits_{i=1}^{l+l'-n}
M(n,r+r'+1-i)),$$
where $l_0={\rm min}\{l, l'\}$ and $l_1={\rm max}\{l, l'\}$. \ \ \ \ \ \ \hfill$\Box$
\end{proposition}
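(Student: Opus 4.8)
The plan is to reduce to the case $r=r'=0$ and then to obtain $M(l,0)\ot M(l',0)$ as the cokernel in a short exact sequence whose other two terms are already described by Propositions~\ref{pro} and~\ref{cor<n}. First, by Lemma~\ref{1-l} one has $M(l,r)\ot M(l',r')\cong S_{r+r'}\ot\bigl(M(l,0)\ot M(l',0)\bigr)$ and $S_{r+r'}\ot M(k,s)\cong M(k,s+r+r')$, while the right-hand side of the assertion is visibly symmetric in $(l,r)$ and $(l',r')$. Hence it suffices to treat $r=r'=0$, and we may assume $l\geqslant l'$; then $l_0=l'$, $l_1=l$, and the hypothesis $l+l'>n$ forces $n-l<l'$.

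Using the standard basis of Lemma~\ref{MStruc1}, the submodule $h^{l}\cdot M(n,0)$ of $M(n,0)$ is isomorphic to $M(n-l,-l)$ with quotient $M(l,0)$, so there is a short exact sequence $0\ra M(n-l,-l)\ra M(n,0)\ra M(l,0)\ra 0$. Since $-\ot M(l',0)$ is exact (it is exact on underlying vector spaces and the $H_n(q)$-action is functorial in the left tensorand), applying it gives
$$0\ra M(n-l,-l)\ot M(l',0)\ra M(n,0)\ot M(l',0)\ra M(l,0)\ot M(l',0)\ra 0.$$
By Proposition~\ref{pro} the middle term is $P:=\bigoplus_{i=1}^{l'}M(n,i-l')$, which is injective by Corollary~\ref{pro-inj}. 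Since $(n-l)+l'\leqslant n$ and $\min\{n-l,l'\}=n-l$, Proposition~\ref{cor<n} gives that the left term is $L:=M(n-l,-l)\ot M(l',0)\cong\bigoplus_{i=1}^{n-l}M(l+l'-n-1+2i,\,i-n)$, a direct sum of $n-l$ non-projective uniserial modules.

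It remains to identify the cokernel $Q:=M(l,0)\ot M(l',0)$. Because $P$ is injective and $L\subseteq P$, the inclusion lifts to an embedding of the injective hull $E(L)$ into $P$ as a direct summand, say $P\cong E(L)\oplus P'$, whence $Q\cong\bigl(E(L)/L\bigr)\oplus P'$. Now $E(L)$ and the embedding $L\hookrightarrow E(L)$ are computed summand by summand: the summand $M(m_i,s_i)$ with $m_i=l+l'-n-1+2i$, $s_i=i-n$, has simple socle, its injective hull is $M(n,s_i-m_i)$ by Corollary~\ref{pro-inj}(3), and inside it $M(m_i,s_i)$ is the unique $m_i$-dimensional submodule $h^{\,n-m_i}\cdot M(n,s_i-m_i)$, with quotient $M(n-m_i,\,s_i-m_i)$. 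Thus $E(L)/L\cong\bigoplus_{i=1}^{n-l}M(n-m_i,\,s_i-m_i)$, and after reducing the exponents modulo $n$ this rewrites as $\bigoplus_{i=1}^{n-l}M(|l-l'|-1+2i,\,i-l')$, the first block in the statement. Comparing $E(L)\cong\bigoplus_{i=1}^{n-l}M(n,i-l')$ with $P=\bigoplus_{i=1}^{l'}M(n,i-l')$ and using the Krull--Schmidt theorem gives $P'\cong\bigoplus_{i=1}^{l+l'-n}M(n,1-i)$, the second block. Twisting back by $S_{r+r'}$ yields the general formula.

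The structural ingredients (an injective submodule is a direct summand; the injective hull of a direct sum of uniserials; uniserial quotients; Krull--Schmidt) are standard, so the only genuine obstacle is the triple index bookkeeping modulo $n$: checking that the exponents emerging from $E(L)/L$ and from $P'$ reassemble exactly into $|l-l'|-1+2i$, $i-l'$ and $1-i$, and that $P'$ has precisely $l+l'-n$ summands. This is where care is needed, especially near the ``wrap-around'' in $\mathbb{Z}_n$. Alternatively, one may bypass the bookkeeping entirely and read the decomposition off the general tensor-product formulas of \cite[Theorem 4.1]{Cib} or \cite[Theorem 3.1]{Gunn}, which is the route adopted in this paper.
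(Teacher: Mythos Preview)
Your argument is correct and gives a genuine proof, whereas the paper simply imports the statement from \cite[Theorem~4.1]{Cib} and \cite[Theorem~3.1]{Gunn} without argument. Your route---embed $M(l,0)$ as a quotient of the projective $M(n,0)$, tensor the resulting short exact sequence with $M(l',0)$, and split the injective middle term as $E(L)\oplus P'$---is a clean reduction of Proposition~\ref{cor>n} to Propositions~\ref{pro} and~\ref{cor<n}, and the index bookkeeping you flag does indeed check out after the reindexing $j=n-l+1-i$. What this buys over a bare citation is a self-contained derivation inside the paper's own framework; what the citation buys is avoiding exactly the wrap-around accounting in $\mathbb{Z}_n$ you warn about.

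One small point to tighten: the step ``we may assume $l\geqslant l'$'' is justified only by the symmetry of the right-hand side, but the left-hand side $M(l,0)\otimes M(l',0)$ is not yet known to be symmetric---Corollary~\ref{commu} is deduced \emph{from} this proposition, so invoking it here would be circular. The fix is immediate: for $l<l'$ run the identical argument with the short exact sequence for $M(l',0)$ and tensor on the \emph{left} by $M(l,0)$; Propositions~\ref{pro} and~\ref{cor<n} are already stated for both tensor orders, so nothing else changes.
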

Following Propositions  \ref{cor<n} and \ref{cor>n}, we obtain the following:

\begin{corollary}\label{sim-summand}
Let $1\leqslant l, l'\leqslant n-1$ and $r, r'\in{\mathbb Z}_n$.
Then there is a simple summand in $M(l, r)\ot M(l', r')$ if and only if
$l=l'$. $\hfill\Box$
\end{corollary}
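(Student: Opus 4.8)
The plan is to read off the claim directly from the decomposition formulas in Propositions~\ref{cor<n} and \ref{cor>n}, since a simple summand of $M(l,r)\ot M(l',r')$ is precisely a summand of the form $M(1,s)$ for some $s\in\mathbb{Z}_n$, and every summand appearing in those two formulas is of the shape $M(|l-l'|-1+2i,\ast)$ or $M(n,\ast)$. So the whole question reduces to: when can one of the first integers $|l-l'|-1+2i$ (for $1\le i\le l_0$, or $1\le i\le n-l_1$) equal $1$, given that $1\le l,l'\le n-1$? The summands $M(n,\ast)$ are never simple because $n\ge 2$, so they can be ignored.

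First I would dispose of the ``if'' direction. If $l=l'$, then $|l-l'|=0$ and $l_0=\min\{l,l'\}=l$. In the case $l+l'\le n$ Proposition~\ref{cor<n} gives the summand indexed by $i=1$, namely $M(|l-l'|-1+2\cdot 1,\ r+r'+1-l_0)=M(1,\,r+r'+1-l)$, which is simple by Corollary~\ref{pro-inj}(1). In the case $l+l'>n$ one has $2l>n$, hence $n-l_1=n-l\ge 1$, so the index $i=1$ is still allowed in the first sum of Proposition~\ref{cor>n}, and it again produces $M(1,\,r+r'+1-l)$. Either way a simple summand occurs.

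For the ``only if'' direction, suppose $l\ne l'$, say without loss of generality $l<l'$ (the formulas are symmetric in the two factors, as is the statement). Then $|l-l'|=l'-l\ge 1$, so $|l-l'|-1+2i\ge 1-1+2\cdot 1=2$ for every $i\ge 1$; in fact $|l-l'|-1+2i$ is odd if $l'-l$ is even and even if $l'-l$ is odd, but the crucial point is simply that it is $\ge 2$ for all $i\ge1$. Hence none of the non-projective summands in either Proposition~\ref{cor<n} or Proposition~\ref{cor>n} has first parameter $1$, and the projective summands $M(n,\ast)$ are not simple since $n\ge 2$. Therefore $M(l,r)\ot M(l',r')$ has no simple summand, proving the contrapositive.

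There is essentially no obstacle here: the content is entirely in Propositions~\ref{cor<n} and \ref{cor>n}, and the proof is a bookkeeping check of the indices $|l-l'|-1+2i$. The only mild care needed is to confirm that the index $i=1$ is genuinely in range in the $l+l'>n$ case, i.e.\ that $n-l_1\ge 1$ when $l=l'$; this holds because $l=l'\le n-1$ forces $l_1=l\le n-1$, so $n-l_1\ge 1$. I would write the argument in two short paragraphs mirroring the two implications above.
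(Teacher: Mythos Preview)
Your argument is correct and is exactly the approach the paper intends: the corollary is stated with no proof beyond the remark that it follows from Propositions~\ref{cor<n} and~\ref{cor>n}, and your reading of the indices $|l-l'|-1+2i$ together with the observation that the $M(n,\ast)$ summands are never simple is precisely what is needed. The care you take to verify $n-l_1\ge 1$ in the $l=l'$, $l+l'>n$ case is appropriate and completes the check.
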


The following property of $M(l,r)$ can be derived from Lemma \ref{1-l}, Propositions \ref{pro},
\ref{cor<n} and \ref{cor>n}.

\begin{corollary}\label{commu}
Let $1\leqslant l, l'\leqslant n$ and $r, r'\in{\mathbb Z}_n$.
Then
$$M(l,r)\ot M(l',r')\cong M(l',r')\ot M(l,r). $$ \hfill$\Box$
\end{corollary}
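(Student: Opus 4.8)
The statement to prove is \coref{commu}: the tensor product of indecomposable $H_n(q)$-modules is commutative, i.e. $M(l,r)\ot M(l',r')\cong M(l',r')\ot M(l,r)$ for $1\le l,l'\le n$ and $r,r'\in\mathbb{Z}_n$.

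The plan is to verify the isomorphism by a straightforward case analysis, splitting according to which of the propositions governs the decomposition of each side. First I would dispose of the cases where one of the modules is projective. If $l=n$ or $l'=n$, \prref{pro} (together with \leref{1-l} when the other factor is also simple or when $l=l'=n$, which follows from \prref{pro} with $l=n$) gives both tensor products explicitly as $\bigoplus_{i=1}^{l}M(n,r+r'+i-l)$ where $l$ is the non-projective length; the formula is manifestly symmetric in the roles of $(l,r)$ and $(l',r')$ since it depends only on $l$ (the smaller length, here the non-$n$ one), the sum $r+r'$, and $l$ again. Next, for $1\le l,l'\le n-1$, I would split on whether $l+l'\le n$ or $l+l'>n$. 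In the first case \prref{cor<n} applies to both orders: the right-hand side is $\bigoplus_{i=1}^{l_0}M(|l-l'|-1+2i,\,r+r'+i-l_0)$ with $l_0=\min\{l,l'\}$, and every ingredient — $|l-l'|$, $l_0$, and $r+r'$ — is symmetric under swapping, so the two decompositions coincide term by term. In the second case \prref{cor>n} applies to both orders, and the same observation handles it: $|l-l'|$, $l_0=\min\{l,l'\}$, $l_1=\max\{l,l'\}$, $l+l'$, and $r+r'$ are all symmetric functions of the unordered pair, so both $\bigoplus_{i=1}^{n-l_1}M(|l-l'|-1+2i,\,r+r'+i-l_0)$ and $\bigoplus_{i=1}^{l+l'-n}M(n,\,r+r'+1-i)$ are unchanged when we interchange $(l,r)$ and $(l',r')$.

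Having checked each case, I would conclude by invoking the Krull--Schmidt theorem: since both $M(l,r)\ot M(l',r')$ and $M(l',r')\ot M(l,r)$ are finite dimensional $H_n(q)$-modules with identical multiset of indecomposable summands (as just verified), they are isomorphic. The one genuinely substantive point, rather than an obstacle, is simply to note in each case that the defining data of the decomposition formula involves $l$ and $l'$ only through the symmetric quantities $\min$, $\max$, $|l-l'|$, and $l+l'$, and involves $r,r'$ only through $r+r'$; once this is observed the symmetry is immediate and no computation is required. There is no real obstacle here — the result is essentially a corollary of the bookkeeping already set up in \prref{pro}, \prref{cor<n}, and \prref{cor>n}, which is why the paper states it as a corollary with the proof omitted; the only care needed is to make sure the boundary cases ($l=n$, $l'=n$, $l=l'$, $l+l'=n$ exactly) are covered by the appropriate proposition and that the index ranges in the direct sums match up after the swap.
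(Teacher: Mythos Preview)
Your proposal is correct and follows exactly the route the paper intends: the corollary is stated as an immediate consequence of \leref{1-l} and Propositions~\ref{pro}, \ref{cor<n}, \ref{cor>n}, and your case analysis simply spells out why the decomposition formulas in those results are symmetric in $(l,r)$ and $(l',r')$. There is nothing to add; the paper omits the proof for precisely the reason you identify.
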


From Theorem \ref {IMClass} and Corollary \ref{commu}, one can deduce the following known result (see \cite[Page 467]{Cib}).

\begin{corollary}\label{Commu}
For any $H_n(q)$-modules $M$ and $N$, there is an $H_n(q)$-module
isomorphism
$$M\ot N\cong N\ot M. $$ \hfill$\Box$
\end{corollary}

In the sequel, we let $a=[S_{-1}]$ and $x=[M(2,0)]$
in the Green ring $r(H_n(q))$ of $H_n(q)$.
From Corollary \ref{Commu}, we know that $r(H_n(q))$ is a
commutative ring.

\begin{lemma}\label{1}
\begin{enumerate}
\item $a^n=1$ and $[M(l, r)]=a^{n-r}[M(l,0)]$ for all
$2\leqslant l\leqslant n$ and $r\in{\mathbb Z}_n$.
\item If $n>2$, then $[M(l+1, 0)]=x[M(l,0)]-a[M(l-1,0)]$
for all $2\leqslant l\leqslant n-1$.
\item$x[M(n,0)]=(a+1)[M(n,0)]$.
\item $r(H_n(q))$ is generated by $a$ and $x$ as a ring.
\end{enumerate}
\end{lemma}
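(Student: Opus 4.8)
The plan is to prove the four assertions essentially in the order stated, since each one feeds into the next. Part (1) is the easiest: $a = [S_{-1}]$ and by Lemma~\ref{1-l} we have $S_{-1}\ot S_1 \cong S_0 = k$, the trivial module, so $a$ is invertible with $a^{-1}=[S_1]$; iterating $S_{-1}\ot\cdots\ot S_{-1}$ ($n$ times) gives $S_{-n}=S_0$, hence $a^n=1$. For the second half of (1), again by Lemma~\ref{1-l} we have $M(l,r)\cong S_r\ot M(l,0)$, so $[M(l,r)] = [S_r][M(l,0)] = a^{-r}[M(l,0)] = a^{n-r}[M(l,0)]$ using $a^n=1$. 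This handles (1).

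For part (2), I would compute $x[M(l,0)] = [M(2,0)\ot M(l,0)]$ directly from the decomposition formulas. Since $2+l \le n+1$, one splits into the case $2+l\le n$ (apply Proposition~\ref{cor<n} with $l_0 = \min\{2,l\}=2$) and the boundary case $l = n-1$, $2+l = n+1 > n$ (apply Proposition~\ref{cor>n} with $l_0=2$, $l_1 = n-1$). In the first case Proposition~\ref{cor<n} gives $M(2,0)\ot M(l,0) \cong M(l-1, -1)\oplus M(l+1, 0)$, which in the Green ring reads $x[M(l,0)] = [M(l-1,-1)] + [M(l+1,0)] = a[M(l-1,0)] + [M(l+1,0)]$ by part (1); rearranging gives the claimed recursion. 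The boundary case $l=n-1$ needs to be checked to give the same relation $x[M(n-1,0)] = a[M(n-2,0)] + [M(n,0)]$, which it does since Proposition~\ref{cor>n} then yields one summand $M(n-2,-1)$ from the first family ($n-l_1 = 1$ term) and one summand $M(n,0)$ from the second family ($l+l'-n = 1$ term). So (2) follows from a short bookkeeping computation.

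Part (3) is analogous but uses Proposition~\ref{pro} instead: $x[M(n,0)] = [M(2,0)\ot M(n,0)] = \sum_{i=1}^{2}[M(n, i-2)] = [M(n,-1)] + [M(n,0)] = a[M(n,0)] + [M(n,0)] = (a+1)[M(n,0)]$, again invoking part (1) to rewrite $[M(n,-1)] = a[M(n,0)]$. Finally, for part (4): by Theorem~\ref{IMClass} the classes $[M(l,r)]$ with $1\le l\le n$, $r\in\mathbb{Z}_n$ form a $\mathbb{Z}$-basis of $r(H_n(q))$, so it suffices to show every such basis element lies in the subring generated by $a$ and $x$. Note $[M(1,0)] = [S_0] = 1$ and $x = [M(2,0)]$ are in this subring; then the recursion in (2) shows inductively that $[M(l,0)]$ lies in $\mathbb{Z}[a,x]$ for all $2\le l\le n$ (the step $l\to l+1$ valid for $2\le l\le n-1$, and for $n>2$; the case $n=2$ being trivial since then only $[M(1,0)]$ and $[M(2,0)]$ occur); finally part (1) gives $[M(l,r)] = a^{n-r}[M(l,0)]\in\mathbb{Z}[a,x]$ for all $l,r$. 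Hence $a$ and $x$ generate $r(H_n(q))$ as a ring.

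The only mildly delicate point is getting the boundary/edge cases in (2) right --- making sure the formula from Proposition~\ref{cor>n} at $l+l'=n+1$ reproduces exactly the same two-term recursion as the generic case from Proposition~\ref{cor<n}, and similarly that the small values of $n$ (especially $n=2$, where (2) is vacuous) are consistent with (4). Beyond that the proof is a direct translation of the tensor-product decomposition formulas into additive identities in the Green ring, with part (1) used repeatedly to absorb the shift in the second index into a power of $a$.
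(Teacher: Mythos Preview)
Your proposal is correct and follows essentially the same approach as the paper's own proof: each part is derived directly from the tensor-product decomposition formulas (Lemma~\ref{1-l}, Propositions~\ref{pro}, \ref{cor<n}, \ref{cor>n}), with part~(1) used to rewrite index shifts as powers of $a$, and part~(4) obtained by induction from (1)--(3). Your treatment is in fact slightly more careful than the paper's, since you explicitly separate and verify the boundary case $l=n-1$ in part~(2) via Proposition~\ref{cor>n}, whereas the paper simply cites both propositions without distinguishing the cases.
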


\begin{proof}
(1): Follows from Lemma \ref{1-l} since $[S_0]$ is the identity of the ring
$r(H_n(q))$.

(2): If $n>2$ and $2\leqslant l\leqslant n-1$, then by Propositions \ref{cor<n} and \ref{cor>n}
and Lemma \ref{1-l}, we have
$$\begin{array}{rcl}
M(2,0)\ot M(l, 0)&\cong& M(l-1,-1)\oplus M(l+1,0)\\
&\cong& S_{-1}\ot M(l-1,0)
\oplus M(l+1,0).\\
\end{array}$$
It follows that $[M(l+1, 0)]=x[M(l,0)]-a[M(l-1,0)]$.

(3): By Proposition \ref{pro} and Lemma \ref{1-l}, we have
$$\begin{array}{rcl}
M(2,0)\ot M(n, 0)&\cong& M(n,-1)\oplus M(n,0)\\
&\cong& S_{-1}\ot M(n,0)
\oplus M(n,0)\\
&\cong&(S_{-1}\oplus S_0)\ot M(n,0).\\
\end{array}$$
It follows that $x[M(n,0)]=(a+1)[M(n,0)]$.

(4): Follows from (1), (2) and (3).
\end{proof}

\begin{corollary}\label{2}
Let $u_1, u_2, \cdots$ be a series of elements of the ring
$r(H_n(q))$ defined recursively by $u_1=1$, $u_2=x$ and
$$u_l=xu_{l-1}-au_{l-2},\ l\geqslant 3.$$
Then $[M(l, 0)]=u_l$ for all $1\leqslant l\leqslant n$ and
$(x-a-1)u_n=0$.
\end{corollary}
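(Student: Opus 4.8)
The plan is to prove the two assertions separately, both by induction on $l$ using the recursion that defines the $u_l$.

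First, to show $[M(l,0)]=u_l$ for all $1\leqslant l\leqslant n$, I would argue by strong induction on $l$. The base cases $l=1$ and $l=2$ are immediate: $u_1=1=[S_0]=[M(1,0)]$ by Corollary~\ref{pro-inj}(1), and $u_2=x=[M(2,0)]$ by the very definition of $x$. For the inductive step, suppose $3\leqslant l\leqslant n$ and that $[M(l-2,0)]=u_{l-2}$ and $[M(l-1,0)]=u_{l-1}$. Since $l-1$ satisfies $2\leqslant l-1\leqslant n-1$, Lemma~\ref{1}(2) (which requires $n>2$, and indeed $n\geqslant l\geqslant 3$ here) gives $[M(l,0)]=x[M(l-1,0)]-a[M(l-2,0)]=xu_{l-1}-au_{l-2}=u_l$, which is exactly the defining recursion for $u_l$. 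This completes the induction.

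Second, to show $(x-a-1)u_n=0$, I would substitute the identity just proved: $u_n=[M(n,0)]$. Then $(x-a-1)u_n=x[M(n,0)]-(a+1)[M(n,0)]$, and this vanishes precisely by Lemma~\ref{1}(3), which states $x[M(n,0)]=(a+1)[M(n,0)]$. So this second part is a one-line consequence of the first part together with Lemma~\ref{1}(3).

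I do not anticipate a genuine obstacle here, since all the structural work (the tensor product decomposition formulas, the recursion among the $[M(l,0)]$, and the relation at $l=n$) has already been packaged into Lemma~\ref{1} and Corollary~\ref{pro-inj}. The only point requiring a moment's care is the bookkeeping on the index range in the inductive step: one must check that whenever the recursion $u_l=xu_{l-1}-au_{l-2}$ is invoked with $3\leqslant l\leqslant n$, the hypothesis $2\leqslant l-1\leqslant n-1$ of Lemma~\ref{1}(2) is met, which it is, and that the case $n=2$ is handled separately (there the claim is just $[M(1,0)]=u_1$, $[M(2,0)]=u_2$, with no use of part (2), while $(x-a-1)u_2=0$ is Lemma~\ref{1}(3) with $n=2$).
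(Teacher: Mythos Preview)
Your proof is correct and is exactly the intended argument: the paper's own proof simply reads ``Follows from Lemma~\ref{1},'' and what you have written is a careful unpacking of that sentence, with correct handling of the base cases, the index-range check for Lemma~\ref{1}(2), and the $n=2$ edge case.
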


\begin{proof}
Follows from Lemma \ref{1}.
\end{proof}

%Let $A$ be the subring of $r(H_n(q))$ generated by $a$, and let
%$\langle a\rangle$ be the subgroup of the group of the invertible elements
%of $r(H_n(q))$ generated by $a$. Then $\langle a\rangle$ is a cyclic group of
%order $n$ by Lemma \ref{1}(1), and
%$A={\mathbb Z}\langle a\rangle$ is the group ring of $\langle a\rangle$ over $\mathbb Z$.

Let ${\mathbb Z}[y,z]$ be the polynomial algebra over $\mathbb Z$ in two variables
$y$ and $z$. We define a {\it generalized Fibonacci polynomial} $f_n(y,z)\in{\mathbb Z}[y,z]$, $n\geqslant 1$, recursively as follows:
$$f_1(y,z)=1, \ f_2(y,z)=z,\ \mathrm{and}\
f_n(y,z)=zf_{n-1}(y,z)-yf_{n-2}(y,z),\ n\geqslant 3.$$
%By induction on $n$, one can show that deg$_z(f_n(y,z))=n-1$ for all $n\geqslant 1$,
%where ${\rm deg}_z(f(y,z))$ denotes the degree of $f(y,z)\in{\mathbb Z}[y,z]$ in $z$.
Let $I$ be the ideal of ${\mathbb Z}[y,z]$ generated by polynomials $y^n-1$
and $(z-y-1)f_n(y,z)$.

With the above notations, we have the following main result.

\begin{theorem}
The Green ring $r(H_n(q))$ of $H_n(q)$ is isomorphic to the quotient ring
${\mathbb Z}[y,z]/I$.
\end{theorem}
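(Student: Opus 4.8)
The plan is to construct the isomorphism explicitly and then verify it is bijective. First I would define a ring homomorphism $\Phi:{\mathbb Z}[y,z]\to r(H_n(q))$ by $\Phi(y)=a$ and $\Phi(z)=x$; this is well-defined since ${\mathbb Z}[y,z]$ is the free commutative ring on $y,z$ and $r(H_n(q))$ is commutative by Corollary~\ref{Commu}. By Lemma~\ref{1}(4), $\Phi$ is surjective. Next I would check that $I\subseteq\ker\Phi$: we have $\Phi(y^n-1)=a^n-1=0$ by Lemma~\ref{1}(1), and since $\Phi(f_n(y,z))=u_n=[M(n,0)]$ by Corollary~\ref{2} (the recursion defining $f_n$ is exactly that defining $u_n$), we get $\Phi((z-y-1)f_n(y,z))=(x-a-1)u_n=0$, again by Corollary~\ref{2}. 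Hence $\Phi$ descends to a surjective ring homomorphism $\overline{\Phi}:{\mathbb Z}[y,z]/I\to r(H_n(q))$.

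It remains to show $\overline{\Phi}$ is injective, and the cleanest route is a rank/basis count. Since $r(H_n(q))$ is a free ${\mathbb Z}$-module of rank $n^2$ with basis $\{[M(l,r)]\mid 1\le l\le n,\ r\in{\mathbb Z}_n\}$, it suffices to exhibit a spanning set of ${\mathbb Z}[y,z]/I$ with $n^2$ elements whose image under $\overline{\Phi}$ is exactly this basis. The natural candidate is $\{\,y^i f_l(y,z)\mid 0\le i\le n-1,\ 1\le l\le n\,\}$. I would show this set spans ${\mathbb Z}[y,z]/I$ as a ${\mathbb Z}$-module: modulo $y^n-1$ the powers of $y$ reduce to $\{1,y,\dots,y^{n-1}\}$, so ${\mathbb Z}[y,z]/(y^n-1)$ is spanned over ${\mathbb Z}[z]$ by $1,\dots,y^{n-1}$; then one argues that $\{f_1,f_2,\dots,f_n\}$ together with the relation $(z-y-1)f_n=0$ span ${\mathbb Z}[y,z]/(y^n-1)$ over ${\mathbb Z}[y]/(y^n-1)$ — the point being that $z^{l}$ can be rewritten in terms of $f_1,\dots,f_{l+1}$ via the recursion $f_{l+1}=zf_l-yf_{l-1}$, reducing degree in $z$, and once one reaches $f_n$ the relation $zf_n=(y+1)f_n$ lets one eliminate all higher terms. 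Finally, $\overline{\Phi}(y^i f_l(y,z))=a^i u_l=a^i[M(l,0)]=[M(l,n-i)]$ by Lemma~\ref{1}(1) (reindexing $r=n-i$), so the image of this spanning set is precisely the ${\mathbb Z}$-basis of $r(H_n(q))$; a surjection between free ${\mathbb Z}$-modules of the same finite rank that carries a spanning set of size $n^2$ onto a basis must be an isomorphism.

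The main obstacle is the spanning argument for ${\mathbb Z}[y,z]/I$: one must show that the $n^2$ elements $y^i f_l$ really do generate the quotient as an abelian group, i.e. that no further relations are needed beyond $y^n-1$ and $(z-y-1)f_n$ to cut ${\mathbb Z}[y,z]$ down to rank $n^2$. Concretely this amounts to proving that in ${\mathbb Z}[y,z]/(y^n-1)$ every monomial $y^i z^j$ lies in the ${\mathbb Z}$-span of $\{y^i f_l : 0\le i\le n-1,\ 1\le l\le n\}$ modulo $(z-y-1)f_n$. For $j\le n-1$ this follows by downward induction using $f_{j+1}=zf_j-yf_{j-1}$ to write $z^j$ as an integer combination of $f_1,\dots,f_{j+1}$ with a leading coefficient $1$; for $j\ge n$ one writes $z^j=z^{j-n}\cdot z^n$ and uses $z f_n=(y+1)f_n$ (a consequence of the second generator of $I$) to see that $z^{n}$, hence $z^j$, stays within the span of lower $f_l$'s after absorbing a factor of $(y+1)$. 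I would organize this as a short lemma: the images of $\{y^i f_l\}$ span, proved by induction on $j$ in $y^i z^j$. Everything else is formal.
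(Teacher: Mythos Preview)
Your argument is correct and follows the same overall strategy as the paper: define the surjection $\Phi:{\mathbb Z}[y,z]\to r(H_n(q))$ by $y\mapsto a$, $z\mapsto x$, check that the two generators of $I$ die, and then establish injectivity of $\overline{\Phi}$ by a rank-$n^2$ count. The only real difference is bookkeeping. You choose the spanning set $\{y^i f_l(y,z)\}$ for ${\mathbb Z}[y,z]/I$, which has the virtue of mapping directly to the ``obvious'' basis $\{[M(l,r)]\}$ of $r(H_n(q))$, but forces you to work a little to prove it spans the quotient (your induction using $zf_l=f_{l+1}+yf_{l-1}$ and $zf_n\equiv(y+1)f_n$). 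The paper instead takes the monomial set $\{y^iz^j:0\le i,j\le n-1\}$; since $(z-y-1)f_n(y,z)$ is monic of degree $n$ in $z$ over ${\mathbb Z}[y]$, this set spans ${\mathbb Z}[y,z]/I$ for free, and the work shifts to showing that $\{a^ix^j\}$ is a ${\mathbb Z}$-basis of $r(H_n(q))$. The paper then writes down an explicit ${\mathbb Z}$-linear inverse $\psi$ on that basis and checks $\psi\overline{\phi}=\mathrm{id}$. Either way one is doing a unitriangular change of basis between $\{z^{l-1}\}$ and $\{f_l\}$, just on opposite sides of the map; the paper's route is marginally slicker because the spanning step in the quotient is immediate. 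One small remark: your final sentence presupposes that ${\mathbb Z}[y,z]/I$ is free before you know it --- the clean phrasing is that any ${\mathbb Z}$-linear relation among the $y^if_l$ would map to a relation among the basis elements $[M(l,r)]$, hence is trivial, so $\ker\overline{\Phi}=0$.
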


\begin{proof}
By Lemma \ref{1}(4), $r(H_n(q))$ is generated, as a ring, by $a$ and $x$. Hence there is
a unique ring epimorphism $\phi$ from ${\mathbb Z}[y, z]$ to $r(H_n(q))$ such that
$\phi(y)=a$ and $\phi(z)=x$. Since $a^n=1$ by Lemma \ref{1}(1),
$\phi(y^n-1)=0$. Let $\{u_i\}_{i\geqslant 1}$ be the series of elements of $r(H_n(q))$
given in Corollary \ref{2}. It is easy to see that $\phi(f_1(y,z))=u_1$ and $\phi(f_2(y,z))=u_2$.
Now let $i\geqslant 3$ and assume that $\phi(f_{i-2}(y,z))=u_{i-2}$ and $\phi(f_{i-1}(y,z))=u_{i-1}$.
Then
$$\begin{array}{rcl}
\phi(f_i(y,z))&=&\phi(zf_{i-1}(y,z)-yf_{i-2}(y,z))\\
&=&\phi(z)\phi(f_{i-1}(y,z))-\phi(y)\phi(f_{i-2}(y,z))\\
&=&xu_{i-1}-au_{i-2}=u_i.\\
\end{array}$$
Thus $\phi(f_i(y,z))=u_i$ for all $i\geqslant 1$. In particular,
we have $\phi(f_n(y,z))=u_n$, and hence $\phi((z-y-1)f_n(y,z))=(x-a-1)u_n=0$
by Corollary \ref{2}. It follows that $\phi(I)=0$, and that $\phi$ induces a ring
epimorphism $\overline{\phi}: {\mathbb Z}[y,z]/I\rightarrow r(H_n(q))$
such that $\overline{\phi}(\overline{v})=\phi(v)$ for all $v\in\mathbb{Z}[y,z]$,
where $\overline{v}$ denotes the image of $v$ under the natural epimorphism
$\mathbb{Z}[y,z]\rightarrow \mathbb{Z}[y,z]/I$.

Let $A$ be the subring of $r(H_n(q))$ generated by $a$. $A={\mathbb Z}\langle a\rangle$ is the group ring of the cyclic group $\langle a\rangle$ over $\mathbb Z$. By Corollary \ref{2} we have $u_1=1\in A$ and $u_2=x\in Ax\subset A+Ax$.
By induction on $i$ one can show that $u_i\in A+Ax+\cdots Ax^{i-1}$ for all $i\geqslant 1$.
Hence $u_i\in A+Ax+\cdots Ax^{n-1}$ for all $1\leqslant i\leqslant n$.
Thus for all $1\leqslant i\leqslant n$ and $r\in{\mathbb Z}_n$, by Lemma \ref{1}(1)
we have $[M(i,r)]=a^{n-r}[M(i,0)]=a^{n-r}u_i\in A+Ax+\cdots Ax^{n-1}$.
It follows that $r(H_n(q))=A+Ax+\cdots Ax^{n-1}$. Since $A$ is a free $\mathbb Z$-module
with a $\mathbb Z$-basis $\{a^i|0\leqslant i\leqslant n-1\}$,
$r(H_n(q))$ is generated by elements $a^ix^j, 0\leqslant i, j\leqslant n-1$ as a $\mathbb Z$-module.
Since $r(H_n(q))$ is a free $\mathbb Z$-module of rank $n^2$,
$\{a^ix^j|0\leqslant i, j\leqslant n-1\}$ forms a $\mathbb Z$-basis for $r(H_n(q))$.
Hence one can define a $\mathbb Z$-module homomorphism:
$$\psi: r(H_n(q))\rightarrow {\mathbb Z}[y,z]/I,\ \ a^ix^j\mapsto\overline{y^iz^j}
=\overline{y}^i\overline{z}^j,\ \ 0\leqslant i, j\leqslant n-1.$$
Obviously, ${\mathbb Z}[y,z]/I$ is generated by elements, $\overline{y^iz^j}, 0\leqslant i, j\leqslant n-1$,
as a $\mathbb Z$-module. Now we have
$$\psi\overline{\phi}(\overline{y^iz^j})=\psi\phi(y^iz^j)=\psi(a^ix^j)=\overline{y^iz^j}$$
for all $0\leqslant i, j\leqslant n-1$. Hence $\psi\overline{\phi}={\rm id}$, and so
$\overline{\phi}$ is injective. Thus, $\overline{\phi}$ is a ring isomorphism.
\end{proof}

%Note that the Green ring $r(H_n(q))$ can be also described as follows.
%Let $C_n=\langle y\rangle$ be the cyclic group of order $n$ generated by $y$,
%and $B={\mathbb Z}C_n$ the group ring of $C_n$ over $\mathbb Z$. Let
%$B[z]$ be the polynomial ring in one variable $z$ over $B$.
%Define $f_i(z)\in B[z]$, $i\geqslant 1$, recursively as follows:
%$$f_1(z)=1, \ f_2(z)=z, \ \mathrm{and}\
%f_i(z)=zf_{i-1}(z)-yf_{i-2}(z),\ i\geqslant 3.$$
%Let $J$ be the ideal of $B[z]$ generated by $(z-y-1)f_n(z)$,
%and $B[z]/J$ the quotient ring of $B$ modulo $J$.
%Then $B[z]/J\cong r(H_n(q))$.
The coefficients of the generalized Fibonacci polynomial $f_n(y,z)$ can be computed. They are quite similar to those of {\it the standard generalized Fibonacci polynomial} defined by
$$F_1(y,z)=1, \ F_2(y,z)=z,\ \mathrm{and}\
F_n(y,z)=zF_{n-1}(y,z)+yF_{n-2}(y,z),\ n\geqslant 3.$$
For completeness, we compute $f_n(y,z)$ in the following lemma, which might be found elsewhere.

\begin{lemma}
Let ${\mathbb Z}[y,z]$ be the polynomial algebra over $\mathbb Z$ in two variables
$y$ and $z$. Then for any $n\geqslant 1$, we have
\begin{equation}
f_{n}(y, z)=\sum\limits_{i=0}^{[(n-1)/2]}(-1)^i\left[
                                              \begin{array}{c}
                                                n-1-i\\
                                                i\\
                                              \end{array}
                                            \right]y^iz^{n-1-2i}.
\end{equation}
%and
%$$f_{2n+1}(y, z)=\sum\limits_{i=0}^{n-1}(-1)^i\left[
%                                             \begin{array}{c}
%                                                2n-i\\
%                                                i\\
%                                              \end{array}
%                                            \right]y^iz^{2n-2i}+(-1)^ny^n.$$
\end{lemma}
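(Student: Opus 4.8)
The plan is to prove the closed formula
\[
f_{n}(y,z)=\sum_{i=0}^{[(n-1)/2]}(-1)^{i}\binom{n-1-i}{i}y^{i}z^{n-1-2i}
\]
by induction on $n$, exploiting the three-term recurrence $f_{n}=zf_{n-1}-yf_{n-2}$ that defines the generalized Fibonacci polynomials. First I would check the base cases $n=1$ and $n=2$ directly: for $n=1$ the sum has only the $i=0$ term, which is $\binom{0}{0}y^{0}z^{0}=1=f_{1}$, and for $n=2$ the sum again has only $i=0$, giving $\binom{1}{0}z=z=f_{2}$. These anchor the induction.

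For the inductive step, fix $n\ge 3$ and assume the formula holds for $n-1$ and $n-2$. I would substitute the two expansions into $f_{n}=zf_{n-1}-yf_{n-2}$ and collect the coefficient of each monomial $y^{i}z^{n-1-2i}$. Multiplying the $f_{n-1}$-sum by $z$ shifts the exponent of $z$ up by one, so its contribution to the $y^{i}z^{n-1-2i}$ coefficient is $(-1)^{i}\binom{n-2-i}{i}$ (taking the $i$-th term of the $f_{n-1}$-expansion). Multiplying the $f_{n-2}$-sum by $y$ shifts the exponent of $y$ up by one, so its contribution comes from the $(i-1)$-st term of the $f_{n-2}$-expansion, giving $-y\cdot(-1)^{i-1}\binom{(n-3)-(i-1)}{i-1}y^{i-1}z^{(n-3)-2(i-1)}=(-1)^{i}\binom{n-2-i}{i-1}y^{i}z^{n-1-2i}$. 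Hence the total coefficient of $y^{i}z^{n-1-2i}$ in $f_{n}$ is
\[
(-1)^{i}\left[\binom{n-2-i}{i}+\binom{n-2-i}{i-1}\right]=(-1)^{i}\binom{n-1-i}{i},
\]
using Pascal's rule $\binom{m}{i}+\binom{m}{i-1}=\binom{m+1}{i}$ with $m=n-2-i$. This is exactly the claimed coefficient, completing the step.

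The main obstacle is bookkeeping at the boundary indices of the summation: I must verify that the range $0\le i\le[(n-1)/2]$ is produced correctly and that the endpoint terms behave. Concretely, the $z$-term (from $zf_{n-1}$) naturally ranges over $0\le i\le[(n-2)/2]$ and the $y$-term (from $yf_{n-2}$) over $1\le i\le[(n-1)/2]$, so one has to check the two extreme cases separately: at $i=0$ only the $z$-term contributes and equals $z^{n-1}=\binom{n-1}{0}z^{n-1}$ as required since $\binom{n-2}{-1}=0$; and when $n$ is odd the top index $i=(n-1)/2$ receives a contribution only from the $y$-term, where one checks $\binom{n-2-i}{i}$ vanishes (because $n-2-i<i$ there) so Pascal's identity still yields the right value $(-1)^{i}\binom{n-1-i}{i}=(-1)^{i}\binom{i}{i}=(-1)^{i}$. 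Once these edge cases are dispatched, the interior identity is precisely Pascal's rule and the induction closes. Setting $z=y=1$ recovers the ordinary Fibonacci-type identity, which is a useful sanity check but not needed for the proof.
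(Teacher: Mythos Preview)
Your proof is correct and follows essentially the same approach as the paper: induction on $n$ via the recurrence $f_n=zf_{n-1}-yf_{n-2}$ combined with Pascal's rule. The only cosmetic difference is that the paper writes out the inductive step separately for $n$ odd and $n$ even, whereas you handle the boundary indices uniformly using the conventions $\binom{m}{-1}=0$ and $\binom{m}{i}=0$ for $i>m$; both treatments amount to the same computation.
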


\begin{proof}
We prove it by induction on $n$. It is easy to check that Equation (1)  holds
for $1\leqslant n\leqslant 4$. Now let $n>4$ and assume that the equation holds
for smaller positive integers. If $n=2m+1$ is odd, then we have
$$\begin{array}{rcl}
f_{n}(y, z)&=&zf_{2m}(y, z)-yf_{2m-1}\\
&=&\sum\limits_{i=0}^{m-1}(-1)^i\left[\begin{array}{c}
                                      2m-1-i\\
                                      i\\
                                      \end{array}
                                \right]y^iz^{2m-2i}\\
&&-\sum\limits_{i=0}^{m-1}(-1)^i\left[\begin{array}{c}
                                      2m-2-i\\
                                      i\\
                                      \end{array}
                                \right]y^{i+1}z^{2m-2-2i}\\
&=&\sum\limits_{i=0}^{m-1}(-1)^i\left[\begin{array}{c}
                                      2m-1-i\\
                                      i\\
                                      \end{array}
                                \right]y^iz^{2m-2i}\\
&&+\sum\limits_{i=1}^{m}(-1)^i\left[\begin{array}{c}
                                      2m-1-i\\
                                      i-1\\
                                      \end{array}
                                \right]y^{i}z^{2m-2i}\\
&=&z^{2m}+\sum\limits_{i=1}^{m-1}(-1)^i(\left[\begin{array}{c}
                                      2m-1-i\\
                                      i\\
                                      \end{array}
                                \right]+\left[\begin{array}{c}
                                      2m-1-i\\
                                      i-1\\
                                      \end{array}
                                \right])y^{i}z^{2m-2i}\\
&&\ \ \ \ \ +(-1)^{m}y^{m}\\
&=&\sum\limits_{i=0}^{m}(-1)^i\left[\begin{array}{c}
                                      2m-i\\
                                      i\\
                                      \end{array}
                                \right]y^{i}z^{2m-2i}\\
&=&\sum\limits_{i=0}^{[(n-1)/2]}(-1)^i\left[\begin{array}{c}
                                      n-1-i\\
                                      i\\
                                      \end{array}
                                \right]y^{i}z^{n-1-2i}.\\
\end{array}$$
If $n=2(m+1)$ is even, then we have
$$\begin{array}{rcl}
f_{n}&=&zf_{2m+1}(y, z)-yf_{2m}\\
&=&\sum\limits_{i=0}^{m}(-1)^i\left[\begin{array}{c}
                                      2m-i\\
                                      i\\
                                      \end{array}
                                \right]y^iz^{2m+1-2i}\\
&&-\sum\limits_{i=0}^{m-1}(-1)^i\left[\begin{array}{c}
                                      2m-1-i\\
                                      i\\
                                      \end{array}
                                \right]y^{i+1}z^{2m-1-2i}\\
&=&\sum\limits_{i=0}^{m}(-1)^i\left[\begin{array}{c}
                                      2m-i\\
                                      i\\
                                      \end{array}
                                \right]y^iz^{2m+1-2i}\\
&&+\sum\limits_{i=1}^{m}(-1)^i\left[\begin{array}{c}
                                      2m-i\\
                                      i-1\\
                                      \end{array}
                                \right]y^iz^{2m+1-2i}\\
&=&z^{2m+1}+\sum\limits_{i=1}^m(-1)^i(\left[\begin{array}{c}
                                      2m-i\\
                                      i\\
                                      \end{array}
                                \right]+\left[\begin{array}{c}
                                      2m-i\\
                                      i-1\\
                                      \end{array}
                                \right])y^iz^{2m+1-2i}\\
&=&z^{2m+1}+\sum\limits_{i=1}^m(-1)^i\left[\begin{array}{c}
                                      2m+1-i\\
                                      i\\
                                      \end{array}
                                \right]y^iz^{2m+1-2i}\\
&=&\sum\limits_{i=0}^m(-1)^i\left[\begin{array}{c}
                                      2m+1-i\\
                                      i\\
                                      \end{array}
                                \right]y^iz^{2m+1-2i}\\
&=&\sum\limits_{i=0}^{[(n-1)/2]}(-1)^i\left[\begin{array}{c}
                                      n-1-i\\
                                      i\\
                                      \end{array}
                                \right]y^iz^{n-1-2i}.\\
\end{array}$$
Thus the proof is completed.
\end{proof}

Now we can easily derive the Green rings $r(H_n(q))$ for
 $n=2,3, ... , 8$.
\begin{corollary}
When $n=2$, $r(H_2(q))\cong {\mathbb Z}[y,z]/(y^2-1, (z-y-1)z)$.

When $n=3$, $r(H_3(q))\cong {\mathbb Z}[y,z]/(y^3-1, (z-y-1)(z^2-y))$.

When $n=4$, $r(H_4(q))\cong {\mathbb Z}[y,z]/(y^4-1, (z-y-1)(z^3-2yz))$.

When $n=5$, $r(H_5(q))\cong {\mathbb Z}[y,z]/(y^5-1, (z-y-1)(z^4-3yz^2+y^2))$.

When $n=6$, $r(H_6(q))\cong {\mathbb Z}[y,z]/(y^6-1, (z-y-1)(z^5-4yz^3+3y^2z))$.

When $n=7$, $r(H_7(q))\cong {\mathbb Z}[y,z]/(y^7-1, (z-y-1)(z^6-5yz^4+6y^2z^2-y^3))$.

When $n=8$, $r(H_8(q))\cong {\mathbb Z}[y,z]/(y^8-1, (z-y-1)(z^7-6yz^5+10y^2z^3-4y^3z))$. \hfill$\Box$
\end{corollary}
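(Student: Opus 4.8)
The plan is to obtain each of the eight rings directly by specialising the main Theorem. That Theorem gives $r(H_n(q))\cong{\mathbb Z}[y,z]/I$ with $I=\big(y^n-1,\ (z-y-1)f_n(y,z)\big)$; since the first generator $y^n-1$ already has the required shape, the only real task is to compute the generalized Fibonacci polynomials $f_2(y,z),\dots,f_8(y,z)$ and then read off the second generator $(z-y-1)f_n(y,z)$ in each case.

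To get these polynomials I would simply iterate the defining recursion $f_1=1$, $f_2=z$, $f_n=zf_{n-1}-yf_{n-2}$, which terminates after at most seven steps and gives
$$f_2=z,\qquad f_3=z^2-y,\qquad f_4=z^3-2yz,\qquad f_5=z^4-3yz^2+y^2,$$
$$f_6=z^5-4yz^3+3y^2z,\qquad f_7=z^6-5yz^4+6y^2z^2-y^3,\qquad f_8=z^7-6yz^5+10y^2z^3-4y^3z.$$
Equivalently, one may substitute $n=2,\dots,8$ into the closed expression of the preceding Lemma, whose coefficients are the binomial numbers $\binom{n-1-i}{i}$ with alternating signs; the two computations agree by that Lemma. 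Plugging each $f_n$ into $I=\big(y^n-1,(z-y-1)f_n(y,z)\big)$ produces exactly the eight presentations in the statement — in particular the case $n=2$ recovers the Green ring of Sweedler's four-dimensional Hopf algebra, ${\mathbb Z}[y,z]/\big(y^2-1,(z-y-1)z\big)$.

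I do not expect any genuine obstacle here: the Corollary is a finite, purely mechanical specialisation of results already established, and no new idea is needed. The only point requiring care is correct bookkeeping of the signs and binomial coefficients in the recursion for $f_n$, so I would include a consistency check: $\deg_z f_n=n-1$ (leading term $z^{n-1}$), hence $(z-y-1)f_n(y,z)$ is monic of $z$-degree $n$ over the coefficient ring ${\mathbb Z}[y]/(y^n-1)$, and therefore ${\mathbb Z}[y,z]/I$ is free of rank $n^2$ over ${\mathbb Z}$ — precisely as it must be, since $r(H_n(q))$ has ${\mathbb Z}$-basis $\{a^ix^j\mid 0\le i,j\le n-1\}$ by the proof of the Theorem.
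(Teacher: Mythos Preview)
Your proposal is correct and is essentially the same as the paper's approach: the paper treats the Corollary as immediate from the main Theorem together with the explicit formula for $f_n(y,z)$, and your iteration of the recursion $f_n=zf_{n-1}-yf_{n-2}$ reproduces exactly the polynomials listed. The additional rank-$n^2$ consistency check you include is a nice sanity check but is not needed, since the isomorphism is already established by the Theorem.
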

\vspace{1cm}

\begin{remark}
\begin{enumerate}
\item One can easily see that the Grothendick ring of $H_n(q)$ is the group ring $k\mathbb{Z}_n$ generated by the simple module $M(1,0)$. From the above examples, we see that the Green ring is much more complicated than the Grothendick ring.
\item The Green rings of generalized Taft algebras and the Green rings of monomial Hopf algebras
\cite{chyz} can be computed in a similar way. However, the computation of the Green ring of the small quantum group or the Green ring of the quantum double of a Taft algebra seem to be much more complicated as they are not finitely generated \cite{Ch3,ks}.
\item Since the module category of a quasitriangular Hopf algebra $H$ is braided monoidal, the Green ring of $H$ is commutative. The Taft algebra $H_n(q)$ is not quasitriangular in case $n> 2$ (even not almost cocommutative, see \cite{Cib}), but its Green ring is commutative. This leads to the following question: can we characterize the class of Hopf algebras whose Green ring is commutative?
\end{enumerate}
\end{remark}

\section*{ACKNOWLEDGMENTS}

The authors are grateful to the referee for suggesting a formula for the generalized Fibonacci polynomials $f_n(y,z)$. The first named  author would like to thank the Department of Mathematics, University of Antwerp
for its hospitality during his visit in 2011. He is grateful to the Belgium FWO for financial support.
He was also supported by NSF of China (No. 11171291).\\

\end{document}